\title{A Simple Parallel Algorithm with an $O(1/t)$ Convergence Rate for General Convex Programs\thanks{Using the methodology initiated in the current paper, we further developed a different primal-dual type algorithm with the same $O(1/t)$ convergence rate in the extended work \cite{YuNeely16CDC}. }}
\author{
  Hao Yu\thanks{Department of Electrical Engineering, University of Southern California, Los Angeles, CA (\email{yuhao@usc.edu},
    \email{mjneely@usc.edu}).}
  \and
  Michael J. Neely\footnotemark[2]
}
\newtheorem{Def}{Definition}
\newtheorem{Assumption}{Assumption}
\newtheorem{Thm}{Theorem}
\newtheorem{Lem}{Lemma}
\newtheorem{Cor}{Corollary}
\DeclareMathOperator*{\argmin}{argmin}
\begin{document}
\maketitle

\begin{abstract}
This paper considers convex programs with a general (possibly non-differentiable) convex objective function and Lipschitz continuous convex inequality constraint functions. A simple algorithm is developed and achieves an $O(1/t)$ convergence rate. Similar to the classical dual subgradient algorithm and the ADMM algorithm, the new algorithm has a parallel implementation when the objective and constraint functions are separable. However, the new algorithm has a faster $O(1/t)$ convergence rate compared with the best known $O(1/\sqrt{t})$ convergence rate for the dual subgradient algorithm with primal averaging. Further, it can solve convex programs with nonlinear constraints, which cannot be handled by the ADMM algorithm.  The new algorithm is applied to a multipath network utility maximization problem and yields a decentralized flow control algorithm with the fast $O(1/t)$ convergence rate. 
\end{abstract}

\begin{keywords}
convex programs, parallel algorithms, convergence rates
\end{keywords}

\begin{AMS}
90C25, 90C30
\end{AMS}

\section{Introduction}\label{sec:intro}
Fix positive integers $n$ and $m$. 
Consider the general convex program:
\begin{align}
\text{minimize}  ~\quad&f(\mathbf{x}) \label{eq:program-objective}\\
\text{subject to} \quad&  g_k(\mathbf{x}) \leq  0, \forall k\in\{1,2,\ldots,m\}, \label{eq:program-inequality-constraint}\\
			 &  \mathbf{x}\in \mathcal{X}, \label{eq:program-set-constraint}
\end{align}
where set $\mathcal{X}\subseteq \mathbb{R}^{n}$ is a closed convex set;  function $f(\mathbf{x})$ is continuous and  convex on $\mathcal{X}$; and functions $g_k(\mathbf{x}),\forall k \in\{1,2,\ldots,m\}$ are convex and Lipschitz continuous on $\mathcal{X}$. Note that the functions $f(\mathbf{x}), g_1(\mathbf{x}), \ldots, g_m(\mathbf{x})$ are not necessarily differentiable. Denote the stacked vector of multiple functions $g_1(\mathbf{x}), g_2(\mathbf{x}), \ldots, g_m(\mathbf{x})$ as $\mathbf{g}(\mathbf{x}) = \big[g_1(\mathbf{x}), g_2(\mathbf{x}), \ldots, g_m(\mathbf{x})\big]^T$.  The Lipschitz continuity of each $g_{k}(\mathbf{x})$ implies that $\mathbf{g}(\mathbf{x})$ is Lipschitz continuous on $\mathcal{X}$. Throughout this paper, we use $\Vert \cdot \Vert$ to denote the vector Euclidean norm and Lipschitz continuity is defined with respect to the Euclidean norm. The following assumptions are imposed on the convex program  \eqref{eq:program-objective}-\eqref{eq:program-set-constraint}:

\begin{Assumption}[Basic Assumptions] \label{as:basic}~
\begin{itemize}
\item  There exists a (possibly non-unique) optimal solution $\mathbf{x}^\ast\in \mathcal{X}$ that solves the convex program \eqref{eq:program-objective}-\eqref{eq:program-set-constraint}. 
\item There exists a constant $\beta$ such that $\Vert \mathbf{g}(\mathbf{x}_1) - \mathbf{g}(\mathbf{x}_2)\Vert \leq \beta \Vert \mathbf{x}_1 - \mathbf{x}_2\Vert$ for all $\mathbf{x}_1, \mathbf{x}_2 \in \mathcal{X}$, i.e., the Lipschitz continuous function $\mathbf{g}(\mathbf{x})$ has modulus $\beta$. 
\end{itemize}
\end{Assumption} 

\begin{Assumption}[Existence of Lagrange Multipliers] \label{as:strong-duality} The convex program \eqref{eq:program-objective}-\eqref{eq:program-set-constraint} has Lagrange multipliers attaining the strong duality. That is, there exists a Lagrange multiplier vector $\boldsymbol{\lambda}^\ast = [\lambda_1^\ast, \lambda_2^\ast, \ldots, \lambda_m^\ast]^{T}\geq \mathbf{0}$ such that 
\begin{align*}
q(\boldsymbol{\lambda}^\ast) = \min\limits_{\mathbf{x}\in \mathcal{X}}\Big\{f(\mathbf{x}) : g_k(\mathbf{x})\leq 0, \forall k\in\{1,2,\ldots,m\}\Big\}, 
\end{align*}
where $q(\boldsymbol{\lambda}) = \min\limits_{\mathbf{x}\in \mathcal{X}}\{f(\mathbf{x})+ \sum_{k=1}^m \lambda_k g_k(\mathbf{x})\}
$ is the {\it Lagrangian dual function} of problem \eqref{eq:program-objective}-\eqref{eq:program-set-constraint}.\end{Assumption}
\cref{as:strong-duality} is a mild assumption. For convex programs, \cref{as:strong-duality}  is implied by the existence of a vector $\mathbf{s} \in \mathcal{X}$ such that
$g_k(\mathbf{s}) < 0$ for all $k \in \{1, \ldots, m\}$, called the \emph{Slater  condition}  \cite{book_NonlinearProgrammingTA,book_ConvexOptimization}. However, there are convex programs where  \cref{as:strong-duality}  holds but the Slater condition does not hold. 

\subsection{New Algorithm}

Consider the following algorithm described in \cref{alg:dpp}. The algorithm 
computes vectors $\mathbf{x}(t) \in \mathcal{X}$ for iterations $t \in \{0, 1, 2, \ldots\}$. Define the average over the first $t>0$ iterations as $\overline{\mathbf{x}}(t) = \frac{1}{t}\sum_{\tau=0}^{t-1} \mathbf{x}(\tau)$. The algorithm uses an initial guess vector that is represented as $\mathbf{x}(-1)$ and that is chosen as any vector in $\mathcal{X}$.  The algorithm also uses vector variables $\mathbf{Q}(t) = \big[ Q_1(t), \ldots, Q_m(t)\big]^T$ in the computations.  
 The main result of this paper is that, whenever the parameter $\alpha$ is chosen to satisfy $\alpha \geq \beta^2/2$,  the  vector $\overline{\mathbf{x}}(t)$ closely approximates a solution to the convex program and has an approximation error that decays like $O(1/t)$.  

\begin{algorithm} 
\caption{}
\label{alg:dpp}
Let $\alpha>0$ be a constant parameter. Choose any $\mathbf{x}(-1) \in \mathcal{X}$. Initialize $Q_{k}(0) = \max\{0, -g_{k}(\mathbf{x}(-1))\} , \forall k\in\{1,2,\ldots, m\}$. At each iteration $t\in\{0,1,2,\ldots\}$, observe $\mathbf{x}(t-1)$ and $\mathbf{Q}(t)$ and do the following:
\begin{itemize}
\item  Choose $\mathbf{x}(t)$ as 
\begin{align*}
\mathbf{x}(t)  =\argmin_{\mathbf{x}\in \mathcal{X}} \Big\{ f(\mathbf{x})  + \big[\mathbf{Q}(t) + \mathbf{g}(\mathbf{x}(t-1))\big]^T\mathbf{g}(\mathbf{x}) +  \alpha \Vert \mathbf{x} - \mathbf{x}(t-1)\Vert^{2}\Big\},
\end{align*}

\item Update virtual queues via  \[Q_{k}(t+1) = \max\{-g_{k}(\mathbf{x}(t)), Q_{k}(t) + g_{k}(\mathbf{x}(t))\}, \forall k\in\{1,2,\ldots, m\}.\]
\item Update the averages $\overline{\mathbf{x}}(t)$  via \[\overline{\mathbf{x}}(t+1) = \frac{1}{t+1}\sum_{\tau=0}^{t} \mathbf{x}(\tau) = \overline{\mathbf{x}}(t) \frac{t}{t+1} + \mathbf{x}(t) \frac{1}{t+1}.\]
\end{itemize}
\end{algorithm}

The variables $\mathbf{x}(t)$ are called \emph{primal variables}.  The variables $\mathbf{Q}(t)$ can be viewed as 
\emph{dual variables} because they have a close connection to Lagrange multipliers. The variables $\mathbf{Q}(t)$ shall be called \emph{virtual queues} because their update rule resembles a queueing equation.
The virtual queue update used by \cref{alg:dpp} is related to traditional virtual queue and dual variable update rules.  However, there is an important difference.  A traditional update rule is $Q_k(t+1) = \max\{Q_k(t) + g_k(\mathbf{x}(t)), 0\}$ \cite{book_NonlinearProgramming_Bertsekas,book_Neely10}.  In contrast, the new algorithm takes a max with $-g_k(\mathbf{x}(t))$, rather than a max with $0$.  The primal update rule for $\mathbf{x}(t)$ is also new and is discussed in more detail in the next subsection. 

\cref{alg:dpp} has the following desirable property: 
If the functions $f(\mathbf{x})$ and $\mathbf{g}(\mathbf{x})$ are separable with respect to components or blocks of $\mathbf{x}$, then the primal updates for $\mathbf{x}(t)$ can be decomposed into several smaller independent subproblems, each of which only involves a component or block of $\mathbf{x}(t)$.

\subsection{The Dual Subgradient Algorithm and the Drift-Plus-Penalty Algorithm}

The dual subgradient algorithm is a well known iterative technique that approaches optimality for certain strictly convex problems \cite{book_NonlinearProgramming_Bertsekas}. A modification of the dual subgradient algorithm that averages the resulting sequence of primal estimates is known to solve general convex programs (without requiring strict convexity) and provides an $O(1/\sqrt{t})$ convergence rate\footnote{In \cite{Neely05DCDIS,Nedic09,Neely14Arxiv_ConvergenceTime}, the dual subgradient algorithm with primal averaging is proven to achieve an $\epsilon$-approximate solution with $O(1/\epsilon^2)$ iterations by using an $O(\epsilon)$ step size. We say that the dual subgradient algorithm with primal averaging has an $O(1/\sqrt{t})$ convergence rate because an algorithm with $O(1/\sqrt{t})$ convergence requires the same $O(1/\epsilon^2)$ iterations to yield an $\epsilon$-approximate solution. However, the dual subgradient algorithm in \cite{Neely05DCDIS,Nedic09,Neely14Arxiv_ConvergenceTime} does not have vanishing errors as does an  algorithm with $O(1/\sqrt{t})$ convergence. } \cite{Neely05DCDIS,Nedic09,Neely14Arxiv_ConvergenceTime}, where $t$ is the number of iterations.  Work \cite{Necoara14TAC} improves the convergence rate of the dual subgradient algorithm to $O(1/t)$ in the special case when the objective function $f(\mathbf{x})$ is strongly convex and second order differentiable and constraint functions $g_k(\mathbf{x})$ are second order differentiable and have bounded Jacobians. A recent work in \cite{YuNeely15CDC} shows that the convergence rate of the dual subgradient algorithm with primal averaging is $O(1/t)$ without requiring the differentiability of $f(\mathbf{x})$ and $g_k(\mathbf{x})$ but still requiring the strong convexity of $f(\mathbf{x})$. (Further improvements are also possible under more restrictive assumptions, see \cite{YuNeely15CDC}.) The dual subgradient algorithm with primal averaging is also called the Drift-Plus-Penalty (DPP) algorithm. This is because it is a special case of a stochastic optimization procedure that minimizes a drift expression for a quadratic Lyapunov function \cite{book_Neely10}. One advantage of these dual subgradient and drift approaches is that the computations required at every iteration are simple and can yield parallel algorithms when $f(\mathbf{x})$ and $g_k(\mathbf{x})$ are separable.

\cref{alg:dpp} developed in the current paper maintains this simplicity on every iteration, but provides fast $O(1/t)$ convergence for general convex programs, without requiring strict convexity or strong convexity. For example, the algorithm has the $O(1/t)$ convergence rate in the special case of linear $f(\mathbf{x})$.  \cref{alg:dpp} is similar to a DPP algorithm, or equivalently, a classic dual subgradient algorithm with primal averaging, with the following distinctions: 
\begin{enumerate} 
\item The Lagrange multiplier (``virtual queue") update equation for $Q_k(t)$ is modified to take a max with $-g_k(\mathbf{x}(t))$, rather than simply project onto the nonnegative real numbers.
\item The minimization step augments the $Q_k(t)$ weights with $g_k(\mathbf{x}(t-1))$ values obtained on the previous step. These  $g_k(\mathbf{x}(t-1))$ quantities, when multiplied by constraint functions $g_k(\mathbf{x})$, yield a cross-product term in the primal update. This cross term together with another newly introduced quadratic term in the primal update can cancel a quadratic term in an upper bound of the Lyapunov drift such that a finer analysis of the drift-plus-penalty leads to the fast $O(1/t)$ convergence rate.
\item A quadratic term, which is similar to a term used in proximal algorithms \cite{Parikh13ProximalAlgorithm},  is introduced.  This provides a strong convexity ``pushback". The pushback is not sufficient to alone cancel the main drift components, but it cancels residual components introduced by the new $g_k(\mathbf{x}(t-1))$ weight.   
\end{enumerate}
\subsection{The ADMM Algorithm}The Alternating Direction Method of Multipliers (ADMM) is an algorithm used to solve linear equality constrained convex programs in the following form:
\begin{align}
\text{minimize}~\quad &f_{1}(\mathbf{x}) +  f_{2}(\mathbf{y}) \label{eq:admm-program-obj}\\
\text{subject to} \quad  &  \mathbf{A}\mathbf{x} + \mathbf{B}\mathbf{y} = \mathbf{c}, \label{eq:admm-program-equality-constraint}\\
			 &  \mathbf{x}\in \mathcal{X}, \mathbf{y}\in \mathcal{Y}. \label{eq:admm-program-set-constraint}
\end{align}
Define the augmented Lagrangian as $L_{\rho}(\mathbf{x},\mathbf{y}, \boldsymbol{\lambda}) = f_{1}(\mathbf{x}) + f_{2} (\mathbf{y}) + \boldsymbol{\lambda}^{T}\big(\mathbf{A}\mathbf{x} + \mathbf{B}\mathbf{y}- \mathbf{c}\big) + \frac{\rho}{2} \Vert \mathbf{A}\mathbf{x}+ \mathbf{B}\mathbf{y} -\mathbf{c}\Vert^{2}$. At each iteration $t$, the ADMM algorithm consists of the following steps:
\begin{itemize}
\item Update $\mathbf{x}(t) = \argmin_{\mathbf{x}\in \mathcal{X}} L_{\rho} (\mathbf{x}, \mathbf{y}(t-1),  \boldsymbol{\lambda}(t-1))$.
\item Update $\mathbf{y}(t) = \argmin_{\mathbf{y}\in \mathcal{Y}} L_{\rho} (\mathbf{x}(t), \mathbf{y},  \boldsymbol{\lambda}(t-1))$.
\item Update $\boldsymbol{\lambda}(t) = \boldsymbol{\lambda}(t-1) + \rho \big(\mathbf{A}\mathbf{x}(t)+ \mathbf{B}\mathbf{y}(t) - \mathbf{c}\big)$.
\end{itemize}
Thus, the ADMM algorithm yields a distributed algorithm where the updates of $\mathbf{x}$ and $\mathbf{y}$ only involve local sub-problems and is suitable to solve large scale convex programs in machine learning, network scheduling, computational biology and finance \cite{Boyd11ADMMFoundatationTrends}.

The best known convergence rate of ADMM algorithm for convex program with general convex $f_{1}(\cdot)$ and $f_{2}(\cdot)$ is recently shown to be $O(1/t)$ \cite{He12SIAMNA,Lin15JORSC}.  An asynchronous ADMM algorithm with the same $O(1/t)$ convergence rate is studied in \cite{Wei13AsynchronousADMM}. Note that we can apply \cref{alg:dpp} to solve the problem \eqref{eq:admm-program-obj}-\eqref{eq:admm-program-set-constraint} after replacing the equality constraint $\mathbf{A}\mathbf{x} + \mathbf{B}\mathbf{y} = \mathbf{c}$ by two linear inequality constraints $\mathbf{A}\mathbf{x} + \mathbf{B}\mathbf{y} \leq \mathbf{c}$ and $\mathbf{A}\mathbf{x} + \mathbf{B}\mathbf{y} \geq \mathbf{c}$.\footnote{In fact, we do not need to replace the linear equality constraint with two inequality constraints. We can apply \cref{alg:dpp} to problem \eqref{eq:admm-program-obj}-\eqref{eq:admm-program-set-constraint} directly by modifying the virtual queue update equations as $Q_{k}(t+1) = Q_{k}(t) + g_{k}(\mathbf{x}(t)), \forall k\in\{1,2,\ldots, m\}$. In this case, a simple adaption of the convergence rate analysis in this paper can establish the same $O(1/t)$ convergence rate. However, to simplify the presentation, this paper just considers the general convex program in the form of  \eqref{eq:program-objective}-\eqref{eq:program-set-constraint} since any linear equality can be equivalently represented by two linear inequalities.} It can be observed that the algorithm yielded by \cref{alg:dpp} is also separable for $\mathbf{x}$ and $\mathbf{y}$. In fact, the updates of $\mathbf{x}$ and $\mathbf{y}$ in \cref{alg:dpp} are fully parallel while the ADMM algorithm updates $\mathbf{x}$ and $\mathbf{y}$ sequentially. The remaining part of this paper shows that the convergence rate of \cref{alg:dpp} is also $O(1/t)$.

However, a significant limitation of the ADMM algorithm is that it can only solve problems with linear constraints. In contrast, \cref{alg:dpp} proposed in this paper can solve general convex programs with non-linear  constraints.

\subsection{Decentralized Multipath Network Flow Control Problems}

Section \ref{section:network} presents an example application to multipath network flow control  problems.  The algorithm has a queue-based interpretation that is natural for networks. Prior work on distributed optimization for networks is in \cite{Low99TON,Neely05DCDIS,Beck14,YuNeely15CDC,Wei13TAC-1}. It is known that the DPP algorithm achieves $O(1/\sqrt{t})$ convergence for general networks \cite{Neely05DCDIS}, and several algorithms show faster $O(1/t)$ convergence for special classes of strongly convex problems \cite{Beck14,YuNeely15CDC}. However, multipath network flow problems fundamentally fail to satisfy the strong convexity property because they have routing variables that participate in the constraints but not in the objective function.  The algorithm of the current paper does not require strong convexity. It easily solves this multi-path scenario with fast $O(1/t)$ convergence.  The algorithm also has a simple distributed implementation and allows for general convex but nonlinear constraint functions.

\section{Preliminaries and Basic Analysis}

This section presents useful preliminaries in convex analysis and important facts of \cref{alg:dpp}.

\subsection{Preliminaries}

\begin{Def}[Lipschitz Continuity] \label{def:Lipschitz-continuous}
Let $\mathcal{X} \subseteq \mathbb{R}^n$ be a convex set. Function $h: \mathcal{X}\rightarrow \mathbb{R}^m$ is said to be Lipschitz continuous  on $\mathcal{X}$ with modulus $L$ if there exists $L> 0$ such that $\Vert h(\mathbf{y}) - h(\mathbf{x}) \Vert \leq L \Vert\mathbf{y} - \mathbf{x}\Vert$  for all $ \mathbf{x}, \mathbf{y} \in \mathcal{X}$. 
\end{Def}

\begin{Def}[Strongly Convex Functions]
 Let $\mathcal{X} \subseteq \mathbb{R}^n$ be a convex set. Function $h$ is said to be strongly convex on $\mathcal{X}$ with modulus $\alpha$ if there exists a constant $\alpha>0$ such that $h(\mathbf{x}) - \frac{1}{2} \alpha \Vert \mathbf{x} \Vert^2$ is convex on $\mathcal{X}$.
\end{Def}

By the definition of strongly convex functions, it is easy to show that if $h(\mathbf{x})$ is convex and $\alpha>0$, then $h(\mathbf{x}) + \alpha \Vert \mathbf{x} - \mathbf{x}_0\Vert^2$ is strongly convex with modulus $2\alpha$ for any constant $\mathbf{x}_0$.

\begin{Lem}[Theorem 6.1.2 in \cite{book_FundamentalConvexAnalysis}] \label{lm:strong-convex}
Let $h(\mathbf{x})$ be strongly convex on $\mathcal{X}$ with modulus $\alpha$. Let $\partial h(\mathbf{x})$ be the set of all subgradients of $h$ at point $\mathbf{x}$. Then $h(\mathbf{y}) \geq h(\mathbf{x}) + \mathbf{d}^T (\mathbf{y} - \mathbf{x}) + \frac{\alpha}{2}\Vert \mathbf{y} - \mathbf{x} \Vert^2$ for all $\mathbf{x}, \mathbf{y}\in \mathcal{X}$ and all $\mathbf{d}\in \partial h(\mathbf{x})$.
\end{Lem}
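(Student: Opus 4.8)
The plan is to reduce the strong-convexity inequality to the \emph{ordinary} subgradient inequality for a suitably shifted convex function, so that the extra quadratic term emerges purely from algebra. First I would introduce the auxiliary function $\phi(\mathbf{x}) = h(\mathbf{x}) - \frac{\alpha}{2}\Vert\mathbf{x}\Vert^2$, which is convex on $\mathcal{X}$ by the very definition of strong convexity with modulus $\alpha$.

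The key step is to transfer a subgradient of $h$ into a subgradient of $\phi$. Since $h = \phi + \frac{\alpha}{2}\Vert\cdot\Vert^2$ and the quadratic summand is everywhere differentiable with gradient $\alpha\mathbf{x}$, the subdifferential sum rule applies without any constraint qualification, giving $\partial h(\mathbf{x}) = \partial\phi(\mathbf{x}) + \alpha\mathbf{x}$. Hence any $\mathbf{d}\in\partial h(\mathbf{x})$ can be written as $\mathbf{d} = \mathbf{g} + \alpha\mathbf{x}$ with $\mathbf{g} = \mathbf{d} - \alpha\mathbf{x}\in\partial\phi(\mathbf{x})$.

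Next I apply the ordinary subgradient inequality for the convex function $\phi$ at the subgradient $\mathbf{g}$, namely $\phi(\mathbf{y}) \geq \phi(\mathbf{x}) + \mathbf{g}^T(\mathbf{y}-\mathbf{x})$ for all $\mathbf{y}\in\mathcal{X}$. Substituting $\phi = h - \frac{\alpha}{2}\Vert\cdot\Vert^2$ and $\mathbf{g} = \mathbf{d}-\alpha\mathbf{x}$ and rearranging, the residual terms $-\alpha\mathbf{x}^T(\mathbf{y}-\mathbf{x}) + \frac{\alpha}{2}\Vert\mathbf{y}\Vert^2 - \frac{\alpha}{2}\Vert\mathbf{x}\Vert^2$ collapse, upon completing the square, to exactly $\frac{\alpha}{2}\Vert\mathbf{y}-\mathbf{x}\Vert^2$, which yields the claimed inequality.

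The main conceptual point, and the one easiest to get wrong, is that one must \emph{not} try to derive the result from the subgradient inequality of $h$ itself: by definition $\mathbf{d}\in\partial h(\mathbf{x})$ only gives the weaker bound $h(\mathbf{y})\geq h(\mathbf{x})+\mathbf{d}^T(\mathbf{y}-\mathbf{x})$, and attempting to squeeze the quadratic improvement out of this bound alone fails. The quadratic term is recovered only by routing the subgradient through the smooth quadratic part via the sum rule. This transfer step is therefore the crux; once it is in hand, the remainder is a short completion-of-the-square computation.
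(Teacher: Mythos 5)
Your proof is correct, but note that there is nothing in the paper to compare it against: the paper states this lemma as Theorem 6.1.2 of \cite{book_FundamentalConvexAnalysis} and uses it as a known fact, providing no proof of its own. Your route --- pass to the convex function $\phi(\mathbf{x}) = h(\mathbf{x}) - \frac{\alpha}{2}\Vert\mathbf{x}\Vert^2$, transfer the subgradient via $\partial h(\mathbf{x}) = \partial\phi(\mathbf{x}) + \alpha\mathbf{x}$, apply the plain subgradient inequality to $\phi$, and complete the square --- is the standard derivation, and your algebra checks out: the residual $\frac{\alpha}{2}\Vert\mathbf{y}\Vert^2 - \frac{\alpha}{2}\Vert\mathbf{x}\Vert^2 - \alpha\mathbf{x}^T(\mathbf{y}-\mathbf{x})$ is identically $\frac{\alpha}{2}\Vert\mathbf{y}-\mathbf{x}\Vert^2$. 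You also correctly isolate the crux: only the inclusion $\partial h(\mathbf{x}) \subseteq \partial\phi(\mathbf{x}) + \alpha\mathbf{x}$ is nontrivial (the reverse inclusion is immediate), and it is exactly what the naive attempt from $h$'s own subgradient inequality cannot supply. Two refinements would make your write-up airtight. First, ``without any constraint qualification'' is loose: sum-rule equality does require a qualification in general, and what saves you here is that the quadratic summand is finite, convex, and differentiable on all of $\mathbb{R}^n$, so the qualification (e.g., in the Moreau--Rockafellar theorem) holds automatically. Second, the paper's subgradients are defined relative to the convex set $\mathcal{X}$ (the inequality is only required for $\mathbf{y}\in\mathcal{X}$), whereas the textbook sum rule is usually stated for extended-real-valued functions on $\mathbb{R}^n$; this mismatch is harmless, and can be handled either by extending $h$ and $\phi$ to equal $+\infty$ off $\mathcal{X}$ (under which the two notions of subdifferential coincide) or by proving the needed inclusion directly with no external theorem: for $\mathbf{d}\in\partial h(\mathbf{x})$, $\mathbf{y}\in\mathcal{X}$, and $t\in(0,1]$, apply the subgradient inequality of $h$ at the point $\mathbf{x}+t(\mathbf{y}-\mathbf{x})\in\mathcal{X}$, bound $\phi(\mathbf{x}+t(\mathbf{y}-\mathbf{x}))\leq(1-t)\phi(\mathbf{x})+t\phi(\mathbf{y})$ by convexity, divide by $t$, and let $t\downarrow 0$; the directional derivative of the quadratic produces $\alpha\mathbf{x}^T(\mathbf{y}-\mathbf{x})$ and yields $\mathbf{d}-\alpha\mathbf{x}\in\partial\phi(\mathbf{x})$, after which your completion-of-the-square step finishes the proof.
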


\begin{Lem}[Proposition B.24 (f) in \cite{book_NonlinearProgramming_Bertsekas}] \label{lm:first-order-optimality}
Let $\mathcal{X}\subseteq \mathbb{R}^n$ be a convex set. Let function $h$ be convex on $\mathcal{X}$ and $\mathbf{x}^{opt}$ be a global minimum of $h$ on $\mathcal{X}$.  Let $\partial h(\mathbf{x})$ be the set of all subgradients of $h$ at point $\mathbf{x}$. Then, there exists $\mathbf{d} \in \partial h(\mathbf{x}^{opt})$ such that $\mathbf{d}^T(\mathbf{x} - \mathbf{x}^{opt}) \geq 0$ for all $\mathbf{x}\in \mathcal{X}$. 
\end{Lem}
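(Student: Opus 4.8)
The plan is to argue by contradiction, converting the optimality of $\mathbf{x}^{opt}$ into a statement about the one-sided directional derivative of $h$ and then using a separation argument to extract a single subgradient. Recall that for a finite convex function the one-sided directional derivative $h'(\mathbf{x}^{opt};\mathbf{v}) = \lim_{t\downarrow 0}\frac{h(\mathbf{x}^{opt}+t\mathbf{v}) - h(\mathbf{x}^{opt})}{t}$ exists, is sublinear (convex and positively homogeneous) in $\mathbf{v}$, and is the support function of the subdifferential, i.e.\ $h'(\mathbf{x}^{opt};\mathbf{v}) = \max_{\mathbf{d}\in\partial h(\mathbf{x}^{opt})}\mathbf{d}^T\mathbf{v}$, where $\partial h(\mathbf{x}^{opt})$ is nonempty, convex, and compact whenever $h$ is finite on a neighborhood of $\mathbf{x}^{opt}$.

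First I would exploit the optimality of $\mathbf{x}^{opt}$. Fix any $\mathbf{x}\in\mathcal{X}$ and set $\mathbf{v}=\mathbf{x}-\mathbf{x}^{opt}$. By convexity of $\mathcal{X}$, the point $\mathbf{x}^{opt}+t\mathbf{v}$ lies in $\mathcal{X}$ for every $t\in[0,1]$, so $h(\mathbf{x}^{opt}+t\mathbf{v})\geq h(\mathbf{x}^{opt})$; dividing by $t$ and letting $t\downarrow 0$ gives $h'(\mathbf{x}^{opt};\mathbf{v})\geq 0$. Hence $h'(\mathbf{x}^{opt};\cdot)\geq 0$ on the feasible-direction set $\mathcal{X}-\mathbf{x}^{opt}$, and by positive homogeneity on the cone $K$ it generates (and, with some care, on its closure).

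The crux is then to pass from ``for each feasible direction \emph{some} subgradient has nonnegative inner product'' (which nonnegativity of the directional derivative gives immediately) to ``\emph{one} subgradient has nonnegative inner product with every feasible direction,'' which is exactly the claim $\mathbf{d}\in\partial h(\mathbf{x}^{opt})\cap K^{\ast}$, where $K^{\ast}$ is the dual cone of $K$. I would suppose no such $\mathbf{d}$ exists, so that the compact convex set $\partial h(\mathbf{x}^{opt})$ is disjoint from the closed convex cone $K^{\ast}$. Strict separation then yields a vector $\mathbf{p}$ with $\max_{\mathbf{d}\in\partial h(\mathbf{x}^{opt})}\mathbf{d}^T\mathbf{p} < \inf_{\mathbf{w}\in K^{\ast}}\mathbf{w}^T\mathbf{p}$; since $K^{\ast}$ is a cone the infimum is finite only if it equals $0$, forcing $\mathbf{p}\in(K^{\ast})^{\ast}=K$ and the left-hand side to be strictly negative. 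But the left-hand side is $h'(\mathbf{x}^{opt};\mathbf{p})$, contradicting $h'(\mathbf{x}^{opt};\cdot)\geq 0$ on $K$. This contradiction produces the desired $\mathbf{d}$.

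I expect the main obstacle to be the analytic facts underpinning the separation step, namely that $\partial h(\mathbf{x}^{opt})$ is nonempty and compact and that its support function equals the directional derivative; these hold cleanly when $\mathbf{x}^{opt}$ lies in the (relative) interior of the domain of $h$, but otherwise require the relative-interior version of separation. An alternative route avoids directional derivatives entirely: encode the constraint by the indicator $\delta_{\mathcal{X}}$, observe that $\mathbf{x}^{opt}$ minimizes $h+\delta_{\mathcal{X}}$ over $\mathbb{R}^n$ so that $\mathbf{0}\in\partial(h+\delta_{\mathcal{X}})(\mathbf{x}^{opt})$, and invoke the subdifferential sum rule $\partial(h+\delta_{\mathcal{X}})(\mathbf{x}^{opt}) = \partial h(\mathbf{x}^{opt}) + N_{\mathcal{X}}(\mathbf{x}^{opt})$ to write $\mathbf{0}=\mathbf{d}+\mathbf{w}$ with $\mathbf{d}\in\partial h(\mathbf{x}^{opt})$ and $-\mathbf{d}=\mathbf{w}\in N_{\mathcal{X}}(\mathbf{x}^{opt})$, which is precisely $\mathbf{d}^T(\mathbf{x}-\mathbf{x}^{opt})\geq 0$ for all $\mathbf{x}\in\mathcal{X}$. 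Here the obstacle merely shifts to justifying the sum rule through a constraint qualification, so either way the essential difficulty is the same: guaranteeing a single subgradient that is simultaneously compatible with all feasible directions.
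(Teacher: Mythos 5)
There is no proof in the paper to compare against: the lemma is quoted from Proposition B.24(f) of \cite{book_NonlinearProgramming_Bertsekas} and used as a black box, its only role being to establish \cref{cor:strong-convex-quadratic-optimality}. On its own merits, your argument is correct in the setting that citation addresses, namely $h$ real-valued and convex on all of $\mathbb{R}^n$, where $\partial h(\mathbf{x}^{opt})$ is nonempty, convex and compact and the max formula $h'(\mathbf{x}^{opt};\mathbf{v})=\max_{\mathbf{d}\in\partial h(\mathbf{x}^{opt})}\mathbf{d}^T\mathbf{v}$ is available. The chain you describe --- optimality gives $h'(\mathbf{x}^{opt};\cdot)\geq 0$ on the feasible cone $K$; if $\partial h(\mathbf{x}^{opt})\cap K^{\ast}=\emptyset$, strictly separate the compact convex set from the closed convex cone; the cone structure forces the separating vector $\mathbf{p}$ into the bipolar while $\max_{\mathbf{d}\in\partial h(\mathbf{x}^{opt})}\mathbf{d}^T\mathbf{p}<0$; contradiction --- is the standard proof and is sound. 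The one imprecision is that $(K^{\ast})^{\ast}=\mathrm{cl}\,K$ rather than $K$; this is harmless because $h'(\mathbf{x}^{opt};\cdot)$ is a finite sublinear, hence continuous, function, so its nonnegativity extends from $K$ to $\mathrm{cl}\,K$, a point you flagged yourself. Your alternative route through $\mathbf{0}\in\partial\big(h+\delta_{\mathcal{X}}\big)(\mathbf{x}^{opt})$ and the subdifferential sum rule is also valid, and the constraint qualification it needs is automatic in this setting because $\mathrm{dom}\,h=\mathbb{R}^n$.

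What you missed is that under the paper's literal hypotheses the result is a one-liner. The paper assumes $h$ is convex \emph{on $\mathcal{X}$}, so the natural reading of $\partial h(\mathbf{x}^{opt})$ is the set of $\mathbf{d}$ satisfying $h(\mathbf{y})\geq h(\mathbf{x}^{opt})+\mathbf{d}^T(\mathbf{y}-\mathbf{x}^{opt})$ for all $\mathbf{y}\in\mathcal{X}$. Global minimality says $h(\mathbf{y})\geq h(\mathbf{x}^{opt})$ for all $\mathbf{y}\in\mathcal{X}$, so $\mathbf{d}=\mathbf{0}$ is itself a subgradient and satisfies the required inequality trivially; this choice is also all that \cref{cor:strong-convex-quadratic-optimality} needs. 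Your heavier machinery is necessary precisely when $\partial h$ denotes the unconstrained subdifferential over $\mathbb{R}^n$, and your closing caveat is then the right one: for extended-real-valued $h$ with $\mathbf{x}^{opt}$ on the boundary of $\mathrm{dom}\,h$, the subdifferential can be empty, so the statement itself, and not merely this proof of it, requires a qualification there.
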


\begin{Cor} \label{cor:strong-convex-quadratic-optimality}
Let $\mathcal{X} \subseteq \mathbb{R}^{n}$ be a convex set. Let function $h$ be strongly convex on $\mathcal{X}$ with modulus $\alpha$ and $\mathbf{x}^{opt}$ be a global minimum of $h$ on $\mathcal{X}$. Then, $h(\mathbf{x}^{opt}) \leq h(\mathbf{x}) - \frac{\alpha}{2} \Vert \mathbf{x}^{opt} - \mathbf{x}\Vert^{2}$ for all $\mathbf{x}\in \mathcal{X}$.
\end{Cor}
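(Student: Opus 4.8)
The plan is to combine the two preceding lemmas directly, using the same subgradient in both. Since $h$ is strongly convex with modulus $\alpha$, it is in particular convex on $\mathcal{X}$, so \cref{lm:first-order-optimality} applies and guarantees the existence of a subgradient $\mathbf{d} \in \partial h(\mathbf{x}^{opt})$ satisfying the first-order optimality inequality $\mathbf{d}^T(\mathbf{x} - \mathbf{x}^{opt}) \geq 0$ for all $\mathbf{x} \in \mathcal{X}$. The key observation is that this is exactly the cross term that appears when I invoke the strong-convexity lower bound anchored at the minimizer.

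First I would fix an arbitrary $\mathbf{x} \in \mathcal{X}$ and apply \cref{lm:strong-convex} with base point $\mathbf{x}^{opt}$ and target point $\mathbf{x}$, using the subgradient $\mathbf{d}$ obtained above. This yields
\begin{align*}
h(\mathbf{x}) \geq h(\mathbf{x}^{opt}) + \mathbf{d}^T(\mathbf{x} - \mathbf{x}^{opt}) + \frac{\alpha}{2}\Vert \mathbf{x} - \mathbf{x}^{opt}\Vert^2.
\end{align*}
Next I would discard the cross term by invoking the optimality inequality $\mathbf{d}^T(\mathbf{x} - \mathbf{x}^{opt}) \geq 0$, which gives $h(\mathbf{x}) \geq h(\mathbf{x}^{opt}) + \frac{\alpha}{2}\Vert \mathbf{x} - \mathbf{x}^{opt}\Vert^2$. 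Rearranging and noting $\Vert \mathbf{x} - \mathbf{x}^{opt}\Vert = \Vert \mathbf{x}^{opt} - \mathbf{x}\Vert$ produces the claimed bound. Since $\mathbf{x}$ was arbitrary in $\mathcal{X}$, the inequality holds for all $\mathbf{x} \in \mathcal{X}$.

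There is no genuine obstacle here; the only point requiring care is consistency of the subgradient choice, namely that the \emph{same} $\mathbf{d} \in \partial h(\mathbf{x}^{opt})$ furnished by \cref{lm:first-order-optimality} is the one fed into the strong-convexity inequality of \cref{lm:strong-convex}. Because \cref{lm:strong-convex} holds for \emph{every} subgradient at the base point, this presents no difficulty, and the result follows immediately.
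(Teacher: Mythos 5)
Your proof is correct and follows essentially the same route as the paper's own proof: invoke \cref{lm:first-order-optimality} to obtain a subgradient $\mathbf{d} \in \partial h(\mathbf{x}^{opt})$ with nonnegative inner product against $\mathbf{x} - \mathbf{x}^{opt}$, apply the strong-convexity inequality of \cref{lm:strong-convex} at $\mathbf{x}^{opt}$ with that same $\mathbf{d}$, and drop the cross term. Your remark that the argument is sound because \cref{lm:strong-convex} holds for every subgradient at the base point is exactly the implicit justification in the paper's proof.
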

\begin{proof}
A special case when $h$ is differentiable and $\mathcal{X} = \mathbb{R}^{n}$ is Theorem 2.1.8 in \cite{book_ConvexOpt_Nesterov}.  The proof for general $h$ and $\mathcal{X}$ is as follows: Fix $\mathbf{x}\in \mathcal{X}$. By \cref{lm:first-order-optimality}, there exists $\mathbf{d} \in \partial h(\mathbf{x}^{opt})$ such that $\mathbf{d}^T(\mathbf{x} - \mathbf{x}^{opt}) \geq 0$. By \cref{lm:strong-convex}, we also have
\begin{align*}
h(\mathbf{x}) &\geq h(\mathbf{x}^{opt}) +  \mathbf{d}^{T} (\mathbf{x} - \mathbf{x}^{opt}) + \frac{\alpha}{2} \Vert  \mathbf{x} -\mathbf{x}^{opt} \Vert^{2}\\
&\overset{(a)}{\geq} h(\mathbf{x}^{opt}) + \frac{\alpha}{2} \Vert  \mathbf{x} -\mathbf{x}^{opt} \Vert^{2},
\end{align*}
where $(a)$ follows from the fact that $\mathbf{d}^T(\mathbf{x} - \mathbf{x}^{opt}) \geq 0$.
\end{proof}

\subsection{Properties of the Virtual Queues}
\begin{Lem}\label{lm:virtual-queue} In \cref{alg:dpp}, we have
\begin{enumerate}
\item At each iteration $t\in\{0,1,2,\ldots\}$, $Q_k(t)\geq 0$ for all $k\in\{1,2,\ldots,m\}$.
\item At each iteration $t\in\{0,1,2,\ldots\}$, $Q_{k}(t) + g_{k}(\mathbf{x}(t-1))\geq 0$ for all $k\in\{1,2\ldots, m\}$.
\item At iteration $t=0$, $\Vert \mathbf{Q}(0)\Vert^2 \leq \Vert \mathbf{g}(\mathbf{x}(-1))\Vert^2$. At each iteration $t\in\{1,2,\ldots\}$,  $\Vert \mathbf{Q}(t)\Vert^2 \geq \Vert \mathbf{g}(\mathbf{x}(t-1))\Vert^2$.
\end{enumerate}
\end{Lem}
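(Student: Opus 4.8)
The plan is to prove the three parts in order, since the norm bound in part~3 relies on the nonnegativity established in part~1. Every statement reduces to a component-wise analysis of the scalar queue update $Q_k(t+1) = \max\{-g_k(\mathbf{x}(t)), Q_k(t) + g_k(\mathbf{x}(t))\}$ together with the initialization $Q_k(0) = \max\{0, -g_k(\mathbf{x}(-1))\}$, so throughout I would fix a single index $k$ and reason about the corresponding scalar sequence, summing over $k$ at the end where a norm appears.

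For part~1, I would use induction on $t$. The base case $Q_k(0) \geq 0$ is immediate since the initialization takes a max with $0$. For the inductive step, the cleanest observation is that the two arguments of the max defining $Q_k(t+1)$ sum exactly to $Q_k(t)$; since the maximum of two reals is at least their average, $Q_k(t+1) \geq \frac{1}{2} Q_k(t) \geq 0$ by the inductive hypothesis. (A two-case split on the sign of $g_k(\mathbf{x}(t))$ works equally well.) Part~2 then follows directly from the update rule without any further induction: for $t \geq 1$ the update always carries $-g_k(\mathbf{x}(t-1))$ as one of its max arguments, so $Q_k(t) \geq -g_k(\mathbf{x}(t-1))$, i.e.\ $Q_k(t) + g_k(\mathbf{x}(t-1)) \geq 0$; and for $t=0$ the initialization gives $Q_k(0) \geq -g_k(\mathbf{x}(-1))$, which yields the same conclusion.

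For part~3 I would again argue component-wise and square. At $t=0$, checking the two cases $g_k(\mathbf{x}(-1)) \geq 0$ and $g_k(\mathbf{x}(-1)) < 0$ shows $Q_k(0)^2 \leq g_k(\mathbf{x}(-1))^2$ in each case, so summing over $k$ gives the first inequality. For $t \geq 1$ the goal is to show $Q_k(t) \geq \vert g_k(\mathbf{x}(t-1))\vert$: the update gives $Q_k(t) \geq -g_k(\mathbf{x}(t-1))$ directly, and also $Q_k(t) \geq Q_k(t-1) + g_k(\mathbf{x}(t-1)) \geq g_k(\mathbf{x}(t-1))$, where the last step drops the nonnegative term $Q_k(t-1)$ using part~1. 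Combined with $Q_k(t) \geq 0$, this gives $Q_k(t)^2 \geq g_k(\mathbf{x}(t-1))^2$, and summing over $k$ completes the argument.

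There is no serious obstacle here; the lemma is elementary. The only point requiring care is the logical ordering: part~3 for $t \geq 1$ crucially invokes the nonnegativity of $Q_k(t-1)$ from part~1 in order to convert $Q_k(t) \geq Q_k(t-1) + g_k(\mathbf{x}(t-1))$ into $Q_k(t) \geq g_k(\mathbf{x}(t-1))$, so part~1 must be established first.
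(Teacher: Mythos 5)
Your proposal is correct and follows essentially the same route as the paper's own proof: induction on $t$ for part~1, the direct observation that $-g_k(\mathbf{x}(t-1))$ is one of the max arguments for part~2, and the component-wise bound $Q_k(t) \geq \vert g_k(\mathbf{x}(t-1))\vert$ (invoking part~1 to drop $Q_k(t-1)$) for part~3. The only cosmetic difference is your max-exceeds-average trick in the inductive step of part~1, which replaces the paper's two-case sign split but changes nothing substantive.
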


\begin{proof}~
\begin{enumerate}
\item Fix $k\in\{1,2,\ldots, m\}$. Note that $Q_{k}(0)\geq 0$ by the initialization rule $Q_{k}(0) = \max\{0, -g_{k}(\mathbf{x}(-1))\}$. Assume  $Q_k(t) \geq 0$ and consider time $t+1$. If $g_k(\mathbf{x}(t)) \geq 0$, then $Q_k(t+1) = \max\{-g_{k}(\mathbf{x}(t)), Q_{k}(t) + g_{k}(\mathbf{x}(t))\} \geq Q_{k}(t) + g_{k}(\mathbf{x}(t))\geq 0$. If $g_k(\mathbf{x}(t))<0$, then $Q_k(t+1) = \max\{-g_{k}(\mathbf{x}(t)), Q_{k}(t) + g_{k}(\mathbf{x}(t))\}\geq -g_{k}(\mathbf{x}(t)) > 0$.  Thus, $Q_k(t+1) \geq 0$. The result follows by induction.
\item Fix $k\in\{1,2,\ldots, m\}$. Note that $Q_{k}(0) + g_{k}(\mathbf{x}(-1))\geq 0$ by the initialization rule $Q_{k}(0) = \max\{0, -g_{k}(\mathbf{x}(-1))\} \geq -g_{k}(\mathbf{x}(-1))$. For $t\geq 1$, by the virtual queue update equation, we have 
$$Q_{k}(t) = \max\{-g_{k}(\mathbf{x}(t-1)), Q_{k}(t-1) + g_{k}(\mathbf{x}(t-1))\}\geq -g_{k}(\mathbf{x}(t-1)),$$ 
which implies that $Q_{k}(t) + g_{k}(\mathbf{x}(t-1)) \geq 0$.
\item 
\begin{itemize}
\item For $t=0$. Fix $k\in\{1,2,\ldots, m\}$.  Consider the cases $g_k(\mathbf{x}(-1))\geq 0$ and $g_k(\mathbf{x}(-1))<0$ separately. 
If $g_k(\mathbf{x}(-1)) \geq 0$, then $ Q_k(0) = \max\{0,-g_{k}(\mathbf{x}(-1))\} =0$ and so $|Q_k(0)| \leq |g_k(\mathbf{x}(-1))|$. If $g_k(\mathbf{x}(-1)) < 0$, then $Q_k(0) = \max\{0, -g_{k}(\mathbf{x}(-1))\} = -g_k(\mathbf{x}(-1))$. Thus, in both cases, we have $\vert Q_k(0) \vert \leq \vert g_k(\mathbf{x}(-1))\vert$.  Squaring both sides and summing over $k\in\{1,2,\ldots,m\}$ yields $\Vert \mathbf{Q}(0)\Vert^2 \leq \Vert \mathbf{g}(\mathbf{x}(-1))\Vert^2$.  
\item For $t\geq 1$.  Fix $k\in\{1,2,\ldots, m\}$. Consider the cases $g_k(\mathbf{x}(t-1))\geq 0$ and $g_k(\mathbf{x}(t-1))<0$ separately.  If $g_k(\mathbf{x}(t-1)) \geq 0$, then 
\begin{align*}
Q_k(t) &= \max\{-g_{k}(\mathbf{x}(t-1)), Q_{k}(t-1) + g_{k}(\mathbf{x}(t-1))\} \\
&\geq Q_{k}(t-1) + g_{k}(\mathbf{x}(t-1)) \\
&\overset{(a)}{\geq} g_k(\mathbf{x}(t-1))\\
&= |g_k(\mathbf{x}(t-1))|,
\end{align*}
where (a) follows from part 1.  If $g_k(\mathbf{x}(t-1)) < 0$,  then 
\begin{align*}
Q_k(t) &= \max\{-g_{k}(\mathbf{x}(t-1)), Q_{k}(t-1) + g_{k}(\mathbf{x}(t-1))\} \\
&\geq -g_k(\mathbf{x}(t-1))\\
&= |g_k(\mathbf{x}(t-1))|.
\end{align*}
Thus, in both cases, we have $\vert Q_k(t) \vert \geq \vert g_k(\mathbf{x}(t-1))\vert$.  Squaring both sides and summing over $k\in\{1,2,\ldots,m\}$ yields $\Vert \mathbf{Q}(t)\Vert^2 \geq \Vert \mathbf{g}(\mathbf{x}(t-1))\Vert^2$.
\end{itemize}
\end{enumerate}
\end{proof}

\subsection{Properties of the Drift}

Recall that $\mathbf{Q}(t) = \big[ Q_1(t), \ldots, Q_m(t)\big]^T$ is the vector of virtual queue backlogs.  Define  $L(t) = \frac{1}{2} \Vert \mathbf{Q}(t)\Vert^2$. The function $L(t)$ shall be called a \emph{Lyapunov function}. Define the {Lyapunov drift} as 
\begin{align}
\Delta (t) = L(t+1) - L(t) = \frac{1}{2} \big[ \Vert \mathbf{Q}(t+1)\Vert^{2} - \Vert \mathbf{Q}(t)\Vert^{2}\big]. \label{eq:def-drift}
\end{align}

\begin{Lem}\label{lm:drift}At each iteration $t\in\{0,1,2,\ldots\}$ in \cref{alg:dpp}, an upper bound of the Lyapunov drift is given by
\begin{align}
\Delta(t) \leq \mathbf{Q}^T(t) \mathbf{g}(\mathbf{x}(t))  +\Vert \mathbf{g}(\mathbf{x}(t))\Vert^2.  \label{eq:drift}
\end{align}
\end{Lem}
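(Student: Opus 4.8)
The plan is to prove the bound componentwise and then sum over $k$. Fix an index $k \in \{1,2,\ldots,m\}$ and abbreviate $g_k = g_k(\mathbf{x}(t))$ and $Q_k = Q_k(t)$. The virtual queue update rule gives $Q_k(t+1) = \max\{-g_k,\; Q_k + g_k\}$, so I would start by bounding the square $Q_k(t+1)^2$, since $\Delta(t)$ is built from $\sum_k [Q_k(t+1)^2 - Q_k^2]$.

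The one nontrivial ingredient is the elementary fact that for any two real numbers $a,b$ one has $(\max\{a,b\})^2 \le a^2 + b^2$: indeed $(\max\{a,b\})^2$ equals either $a^2$ or $b^2$, and each of these is at most $a^2 + b^2$. Applying this with $a = -g_k$ and $b = Q_k + g_k$ yields
\[
Q_k(t+1)^2 \le g_k^2 + (Q_k + g_k)^2 .
\]
Subtracting $Q_k^2$, expanding $(Q_k + g_k)^2 = Q_k^2 + 2 Q_k g_k + g_k^2$, and dividing by two gives
\[
\tfrac{1}{2}\big[Q_k(t+1)^2 - Q_k^2\big] \le Q_k g_k + g_k^2 .
\]
Summing over $k \in \{1,2,\ldots,m\}$, identifying $\sum_{k} Q_k g_k = \mathbf{Q}^T(t)\mathbf{g}(\mathbf{x}(t))$ and $\sum_{k} g_k^2 = \Vert \mathbf{g}(\mathbf{x}(t))\Vert^2$, and recalling the definition $\Delta(t) = \tfrac{1}{2}[\Vert \mathbf{Q}(t+1)\Vert^2 - \Vert \mathbf{Q}(t)\Vert^2]$ from \eqref{eq:def-drift} then delivers \eqref{eq:drift} exactly.

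Honestly there is no serious obstacle here; the crux is simply recognizing the max-square inequality above. The point worth flagging is that the nonstandard update (taking a max with $-g_k$ rather than with $0$) might appear to spoil the usual drift bound, but since $(\max\{a,b\})^2 \le a^2 + b^2$ holds for \emph{all} reals $a,b$, the argument goes through with no change and, in fact, does not even invoke the nonnegativity $Q_k(t) \ge 0$ from \cref{lm:virtual-queue}. The remaining work is purely the routine algebra of expanding the square and summing the components.
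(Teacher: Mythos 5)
Your proof is correct, and it reaches the paper's componentwise bound by a genuinely more elementary route. Both arguments fix $k$, establish $\frac{1}{2}\big[Q_k(t+1)^2 - Q_k(t)^2\big] \le Q_k(t)\, g_k(\mathbf{x}(t)) + g_k(\mathbf{x}(t))^2$, and then sum over $k$; the difference lies in how that single-coordinate bound is obtained. The paper rewrites the update as $Q_k(t+1) = Q_k(t) + \tilde{g}_k(\mathbf{x}(t))$ with a case-defined increment $\tilde{g}_k$, expands the square exactly, and then needs the case-checked identity $Q_k(t)\big(\tilde{g}_k - g_k\big) = -\big(\tilde{g}_k + g_k\big)\big(\tilde{g}_k - g_k\big)$ to trade the cross term $Q_k(t)\tilde{g}_k$ for $Q_k(t) g_k$; this yields the exact expression $\frac{1}{2}Q_k(t+1)^2 = \frac{1}{2}Q_k(t)^2 - \frac{1}{2}\tilde{g}_k^2 + Q_k(t) g_k + g_k^2$, after which the nonpositive term $-\frac{1}{2}\tilde{g}_k^2$ is dropped. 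You replace that entire computation with the one-line observation $(\max\{a,b\})^2 \le a^2 + b^2$ applied to $a = -g_k$ and $b = Q_k(t) + g_k$, which is shorter and needs no case analysis at all. What the paper's route buys is the exact identity, which pinpoints the discarded slack as precisely $\frac{1}{2}\tilde{g}_k^2$; since nothing downstream in the paper uses that extra information, your argument loses nothing. Your closing remark is also accurate: neither proof invokes $Q_k(t)\ge 0$ from \cref{lm:virtual-queue}, so the drift bound \eqref{eq:drift} indeed holds for arbitrary real virtual queue values.
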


\begin{proof}~
The virtual queue update equations $Q_k(t+1) = \max\{-g_k(\mathbf{x}(t)), Q_k(t) + g_k(\mathbf{x}(t))\}, \forall k\in \{1,2,\ldots,m\}$ can be rewritten as
\begin{align}
Q_k(t+1) = Q_k(t) + \tilde{g}_k(\mathbf{x}(t)), \forall k\in \{1,2,\ldots,m\}, \label{eq:modified-virtual-queue}
\end{align} 
where 
\begin{equation*}
\tilde{g}_k(\mathbf{x}(t)) = \left \{ \begin{array}{cl}  g_k(\mathbf{x}(t)), & \text{if}~Q_k(t) + g_k(\mathbf{x}(t)) \geq -g_k(\mathbf{x}(t))\\
-Q_k(t) - g_k(\mathbf{x}(t)),  & \text{else} \end{array} \right. \forall k.
\end{equation*}

Fix $k\in\{1,2,\ldots, m\}$. Squaring both sides of \eqref{eq:modified-virtual-queue} and dividing by $2$ yields: 
\begin{align*}
&\frac{1}{2}(Q_k (t+1) )^2 \\
= &\frac{1}{2}(Q_k(t))^2 + \frac{1}{2}\big(\tilde{g}_k(\mathbf{x}(t))\big)^2 +  Q_k (t) \tilde{g}_k(\mathbf{x}(t)) \\
= &\frac{1}{2}(Q_k(t))^2 + \frac{1}{2}\big(\tilde{g}_k(\mathbf{x}(t))\big)^2 +  Q_k (t) g_k(\mathbf{x}(t)) +  Q_k (t)\big( \tilde{g}_k(\mathbf{x}(t)) -g_k(\mathbf{x}(t)) \big)  \\
\overset{(a)}{=}& \frac{1}{2}(Q_k(t))^2 + \frac{1}{2}\big(\tilde{g}_k(\mathbf{x}(t))\big)^2 + Q_k (t) g_k(\mathbf{x}(t)) \\& - \big( \tilde{g}_k (\mathbf{x}(t)) + g_k(\mathbf{x}(t))\big)\big( \tilde{g}_k(\mathbf{x}(t)) -g_k(\mathbf{x}(t)) \big)\\
=& \frac{1}{2}(Q_k(t))^2 - \frac{1}{2}\big(\tilde{g}_k(\mathbf{x}(t))\big)^2 + Q_k (t) g_k(\mathbf{x}(t))  + \big(g_k(\mathbf{x}(t))\big)^2 \\
\leq &\frac{1}{2}(Q_k(t))^2 + Q_k (t) g_k(\mathbf{x}(t))  + \big(g_k(\mathbf{x}(t))\big)^2,
\end{align*}
where $(a)$ follows from the fact that $Q_k (t) \big( \tilde{g}_k (\mathbf{x}(t)) -g_k(\mathbf{x}(t)) \big) = -\big(\tilde{g}_k (\mathbf{x}(t)) +g_k(\mathbf{x}(t)) \big)\big( \tilde{g}_k (\mathbf{x}(t)) -g_k(\mathbf{x}(t)) \big)$, which can be shown by considering $\tilde{g}_k (\mathbf{x}(t)) = g_k(\mathbf{x}(t))$ and $\tilde{g}_k (\mathbf{x}(t)) \neq g_k(\mathbf{x}(t))$.
Summing over $k\in\{1,2,\ldots,m\}$ yields 
\begin{align*}
\frac{1}{2}\Vert\mathbf{Q}(t+1)\Vert^{2} \leq \frac{1}{2}\Vert\mathbf{Q}(t)\Vert^{2}  + \mathbf{Q}^T(t ) \mathbf{g}(\mathbf{x}(t)) + \Vert \mathbf{g}(\mathbf{x}(t))\Vert^2.
\end{align*}
Rearranging the terms yields the desired result.
\end{proof}

\section{Convergence Rate Analysis of \cref{alg:dpp} }

This section analyzes the convergence rate of \cref{alg:dpp} for the problem \eqref{eq:program-objective}-\eqref{eq:program-set-constraint}.

\subsection{An Upper Bound of the Drift-Plus-Penalty Expression}

\begin{Lem}\label{lm:dpp-bound}
Let $\mathbf{x}^{\ast}$ be an optimal solution of the problem \eqref{eq:program-objective}-\eqref{eq:program-set-constraint}.  If $\alpha \geq  \frac{1}{2}\beta^{2}$ in \cref{alg:dpp}, then for all $t\geq 0$, we have
\begin{align*}
&\Delta(t) + f(\mathbf{x}(t))  \\
\leq & f(\mathbf{x}^{\ast}) + \alpha \big[ \Vert \mathbf{x} ^{\ast}- \mathbf{x}(t-1)\Vert^{2} -\Vert \mathbf{x}^{\ast} - \mathbf{x}(t)\Vert^{2}\big] +\frac{1}{2} \big[ \Vert \mathbf{g}(\mathbf{x}(t))\Vert^{2} -  \Vert \mathbf{g}(\mathbf{x}(t-1))\Vert^{2} \big],
\end{align*}
where $\beta$ is defined in \cref{as:basic}.
\end{Lem}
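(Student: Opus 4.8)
The plan is to combine the drift bound from \cref{lm:drift} with a strong-convexity ``pushback'' coming from the primal update, and then to absorb the leftover quadratic terms using the Lipschitz modulus $\beta$ together with the hypothesis $\alpha \geq \frac{1}{2}\beta^2$. First I would observe that the function minimized in the $\mathbf{x}(t)$ update, namely $\phi(\mathbf{x}) = f(\mathbf{x}) + [\mathbf{Q}(t) + \mathbf{g}(\mathbf{x}(t-1))]^T\mathbf{g}(\mathbf{x}) + \alpha\|\mathbf{x} - \mathbf{x}(t-1)\|^2$, is strongly convex on $\mathcal{X}$ with modulus $2\alpha$: the term $f$ is convex, the proximal quadratic contributes modulus $2\alpha$, and the weighted sum $[\mathbf{Q}(t) + \mathbf{g}(\mathbf{x}(t-1))]^T\mathbf{g}(\mathbf{x})$ is convex precisely because each weight $Q_k(t) + g_k(\mathbf{x}(t-1))$ is nonnegative by part 2 of \cref{lm:virtual-queue} and each $g_k$ is convex. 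Since $\mathbf{x}(t)$ minimizes $\phi$ over $\mathcal{X}$, \cref{cor:strong-convex-quadratic-optimality} then yields $\phi(\mathbf{x}(t)) \leq \phi(\mathbf{x}^\ast) - \alpha\|\mathbf{x}(t) - \mathbf{x}^\ast\|^2$.

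Next I would expand both $\phi(\mathbf{x}(t))$ and $\phi(\mathbf{x}^\ast)$ in this inequality and invoke the feasibility of $\mathbf{x}^\ast$. Because $\mathbf{g}(\mathbf{x}^\ast) \leq \mathbf{0}$ while the weights $Q_k(t) + g_k(\mathbf{x}(t-1))$ are nonnegative, the cross term $[\mathbf{Q}(t) + \mathbf{g}(\mathbf{x}(t-1))]^T\mathbf{g}(\mathbf{x}^\ast)$ is $\leq 0$ and can be discarded. Rearranging produces an upper bound on $f(\mathbf{x}(t)) + \mathbf{Q}^T(t)\mathbf{g}(\mathbf{x}(t))$ of the form $f(\mathbf{x}^\ast) + \alpha[\|\mathbf{x}^\ast - \mathbf{x}(t-1)\|^2 - \|\mathbf{x}^\ast - \mathbf{x}(t)\|^2] - \mathbf{g}^T(\mathbf{x}(t-1))\mathbf{g}(\mathbf{x}(t)) - \alpha\|\mathbf{x}(t) - \mathbf{x}(t-1)\|^2$. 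I would then add $f(\mathbf{x}(t))$ to the drift bound $\Delta(t) \leq \mathbf{Q}^T(t)\mathbf{g}(\mathbf{x}(t)) + \|\mathbf{g}(\mathbf{x}(t))\|^2$ of \cref{lm:drift} and substitute, which recovers the two $\alpha$-telescoping terms exactly as claimed, plus a residual $\|\mathbf{g}(\mathbf{x}(t))\|^2 - \mathbf{g}^T(\mathbf{x}(t-1))\mathbf{g}(\mathbf{x}(t)) - \alpha\|\mathbf{x}(t) - \mathbf{x}(t-1)\|^2$ that must still be controlled.

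The crux of the argument — and where the augmented $g_k(\mathbf{x}(t-1))$ weight earns its keep — is the final bookkeeping on this residual. The key identity I would exploit is the completion of the square $\|\mathbf{g}(\mathbf{x}(t))\|^2 - \mathbf{g}^T(\mathbf{x}(t-1))\mathbf{g}(\mathbf{x}(t)) = \frac{1}{2}[\|\mathbf{g}(\mathbf{x}(t))\|^2 - \|\mathbf{g}(\mathbf{x}(t-1))\|^2] + \frac{1}{2}\|\mathbf{g}(\mathbf{x}(t)) - \mathbf{g}(\mathbf{x}(t-1))\|^2$, which splits the cross term into the $\tfrac{1}{2}$-telescoping difference appearing in the statement plus a perfect square. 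I would then apply the Lipschitz bound $\|\mathbf{g}(\mathbf{x}(t)) - \mathbf{g}(\mathbf{x}(t-1))\| \leq \beta\|\mathbf{x}(t) - \mathbf{x}(t-1)\|$ from \cref{as:basic}, giving $\frac{1}{2}\|\mathbf{g}(\mathbf{x}(t)) - \mathbf{g}(\mathbf{x}(t-1))\|^2 \leq \frac{1}{2}\beta^2\|\mathbf{x}(t) - \mathbf{x}(t-1)\|^2$; since $\alpha \geq \frac{1}{2}\beta^2$, this perfect square is dominated by the proximal term $\alpha\|\mathbf{x}(t) - \mathbf{x}(t-1)\|^2$, so their sum is nonpositive and drops out, leaving exactly the stated bound. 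I expect the only delicate points to be tracking signs when discarding the $\mathbf{g}(\mathbf{x}^\ast)$ cross term and spotting the completion-of-the-square step; the remaining expansions are mechanical.
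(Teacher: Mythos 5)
Your proposal is correct and follows essentially the same route as the paper's proof: strong convexity of the primal-update objective (via part 2 of \cref{lm:virtual-queue} and \cref{cor:strong-convex-quadratic-optimality}), discarding the $[\mathbf{Q}(t)+\mathbf{g}(\mathbf{x}(t-1))]^T\mathbf{g}(\mathbf{x}^\ast)\leq 0$ term, combining with the drift bound of \cref{lm:drift}, and cancelling the quadratic residue via Lipschitz continuity and $\alpha \geq \frac{1}{2}\beta^2$. The only difference is cosmetic ordering—the paper applies the polarization identity $\mathbf{u}_1^T\mathbf{u}_2 = \frac{1}{2}\big[\Vert\mathbf{u}_1\Vert^2 + \Vert\mathbf{u}_2\Vert^2 - \Vert\mathbf{u}_1-\mathbf{u}_2\Vert^2\big]$ before adding the drift bound, whereas you add the drift first and then complete the square, which is the same identity.
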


\begin{proof} Fix $t\geq 0$.  Note that \cref{lm:virtual-queue} implies that $\mathbf{Q}(t) + \mathbf{g}(\mathbf{x}(t-1))$ is component-wise nonnegative.  Hence, the function $f(\mathbf{x})  + \big[\mathbf{Q}(t) + \mathbf{g}(\mathbf{x}(t-1))\big]^T\mathbf{g}(\mathbf{x})$ is convex with respect to $\mathbf{x}$ on $\mathcal{X}$. Since $ \alpha \Vert \mathbf{x} - \mathbf{x}(t-1)\Vert^{2}$ is strongly convex with respect to $\mathbf{x}$ with modulus $2\alpha$, it follows that
\begin{align*}
f(\mathbf{x})  + \big[\mathbf{Q}(t) + \mathbf{g}(\mathbf{x}(t-1))\big]^T\mathbf{g}(\mathbf{x}) +  \alpha \Vert \mathbf{x} - \mathbf{x}(t-1)\Vert^{2}
\end{align*}
 is strongly convex with respect to $\mathbf{x}$ with modulus $2\alpha$.  

Since $\mathbf{x}(t)$ is chosen to minimize the above strongly convex function, by \cref{cor:strong-convex-quadratic-optimality},  we have
\begin{align}
 &f(\mathbf{x}(t)) + \big[\mathbf{Q}(t) + \mathbf{g}(\mathbf{x}(t-1))\big]^T\mathbf{g}(\mathbf{x}(t))+ \alpha \Vert \mathbf{x}(t) - \mathbf{x}(t-1)\Vert^{2} \nonumber \\
 \leq & f(\mathbf{x}^{\ast}) + \underbrace{\big[\mathbf{Q}(t) + \mathbf{g}(\mathbf{x}(t-1))\big]^T\mathbf{g}(\mathbf{x}^\ast)}_{\leq0}+ \alpha \Vert \mathbf{x} ^{\ast}- \mathbf{x}(t-1)\Vert^{2} - \alpha \Vert \mathbf{x}^{\ast} - \mathbf{x}(t)\Vert^{2} \nonumber \\
\overset{(a)}{\leq} &f(\mathbf{x}^{\ast}) + \alpha \Vert \mathbf{x} ^{\ast}- \mathbf{x}(t-1)\Vert^{2} - \alpha \Vert \mathbf{x}^{\ast} - \mathbf{x}(t)\Vert^{2},  \label{eq:pf-dpp-bound-eq1}
\end{align}
where (a) follows by using the fact that $g_{k}(\mathbf{x}^{\ast})\leq 0$ for all $k\in\{1,2,\ldots,m\}$ and 
$Q_{k}(t) + g_{k}(\mathbf{x}(t-1))\geq 0$ (i.e., part 2 in \cref{lm:virtual-queue}) to eliminate the term marked by an underbrace.

Note that $\mathbf{u}_{1}^{T} \mathbf{u}_{2} = \frac{1}{2} \big[\Vert  \mathbf{u}_{1}\Vert^{2}  + \Vert \mathbf{u}_{2}\Vert^{2} - \Vert \mathbf{u}_{1} - \mathbf{u}_{2}\Vert^{2} \big]$ for any $\mathbf{u}_{1}, \mathbf{u}_{2}\in \mathbb{R}^{m}$. Thus, we have 
\begin{align}\label{eq:pf-dpp-bound-eq2}
\mathbf{g}(\mathbf{x}(t-1))^T \mathbf{g}(\mathbf{x}(t))  =  \frac{1}{2} \big[ \Vert \mathbf{g}(\mathbf{x}(t-1))\Vert^{2}  + \Vert \mathbf{g}(\mathbf{x}(t))\Vert^{2} - \Vert \mathbf{g}(\mathbf{x}(t-1))-\mathbf{g}(\mathbf{x}(t))\Vert^{2} \big]. 
\end{align}

Substituting \eqref{eq:pf-dpp-bound-eq2} into \eqref{eq:pf-dpp-bound-eq1} and rearranging terms  yields
\begin{align*}
 &f(\mathbf{x}(t)) + \mathbf{Q}^{T}(t)\mathbf{g}(\mathbf{x}(t)) \\
 \leq & f(\mathbf{x}^{\ast}) + \alpha \Vert \mathbf{x} ^{\ast}- \mathbf{x}(t-1)\Vert^{2} - \alpha \Vert \mathbf{x}^{\ast} - \mathbf{x}(t)\Vert^{2}  - \alpha \Vert \mathbf{x}(t) - \mathbf{x}(t-1)\Vert^{2}  \\ &+ \frac{1}{2}  \Vert \mathbf{g}(\mathbf{x}(t-1))-\mathbf{g}(\mathbf{x}(t))\Vert^{2} - \frac{1}{2}  \Vert \mathbf{g}(\mathbf{x}(t-1))\Vert^{2} - \frac{1}{2}  \Vert \mathbf{g}(\mathbf{x}(t))\Vert^{2}\\
 \overset{(a)}{\leq} &f(\mathbf{x}^{\ast}) + \alpha \Vert \mathbf{x} ^{\ast}- \mathbf{x}(t-1)\Vert^{2} - \alpha \Vert \mathbf{x}^{\ast} - \mathbf{x}(t)\Vert^{2}  + (\frac{1}{2}\beta^{2}- \alpha) \Vert \mathbf{x}(t) - \mathbf{x}(t-1)\Vert^{2} \\ & - \frac{1}{2}  \Vert \mathbf{g}(\mathbf{x}(t-1))\Vert^{2} - \frac{1}{2}  \Vert \mathbf{g}(\mathbf{x}(t))\Vert^{2} \\
  \overset{(b)}{\leq} &f(\mathbf{x}^{\ast})  + \alpha \Vert \mathbf{x} ^{\ast}- \mathbf{x}(t-1)\Vert^{2} - \alpha \Vert \mathbf{x}^{\ast} - \mathbf{x}(t)\Vert^{2} 
  - \frac{1}{2}  \Vert \mathbf{g}(\mathbf{x}(t-1)) \Vert^{2}  - \frac{1}{2}  \Vert \mathbf{g}(\mathbf{x}(t))\Vert^{2},
 \end{align*}
where (a) follows from the fact that $\Vert \mathbf{g}(\mathbf{x}(t-1)) - \mathbf{g}(\mathbf{x}(t))\Vert \leq \beta \Vert \mathbf{x}(t) - \mathbf{x}(t-1)\Vert$, which further follows from the assumption that $\mathbf{g}(\mathbf{x})$ is Lipschitz continuous with modulus $\beta$; and (b) follows from the fact $\alpha \geq  \frac{1}{2}\beta^{2}$.

Summing \eqref{eq:drift} with the above inequality yields
\begin{align*}
&\Delta(t) + f(\mathbf{x}(t)) \\
\leq &f(\mathbf{x}^{\ast}) + \alpha \big[ \Vert \mathbf{x} ^{\ast}- \mathbf{x}(t-1)\Vert^{2} -\Vert \mathbf{x}^{\ast} - \mathbf{x}(t)\Vert^{2}\big] +\frac{1}{2} \big[ \Vert \mathbf{g}(\mathbf{x}(t))\Vert^{2} -  \Vert \mathbf{g}(\mathbf{x}(t-1))\Vert^{2} \big].
\end{align*}

\end{proof}

\subsection{Objective Value Violations}

\begin{Lem}\label{lm:obj-diff-bound-from-dpp-bound}
Let $\mathbf{x}^{\ast}$ be an optimal solution of the problem \eqref{eq:program-objective}-\eqref{eq:program-set-constraint} and $\beta$ be defined in \cref{as:basic}.  
\begin{enumerate}
\item If $\alpha \geq \frac{1}{2}\beta^{2}$ in \cref{alg:dpp}, then for all $t\geq 1$, we have $\sum_{\tau=0}^{t-1}f(\mathbf{x}(\tau)) \leq t  f(\mathbf{x}^{\ast})   + \alpha \Vert \mathbf{x}^{\ast} - \mathbf{x}(-1)\Vert^{2}$.
\item If $\alpha > \frac{1}{2}\beta^{2}$ in \cref{alg:dpp}, then for all $t\geq 1$, we have $\sum_{\tau=0}^{t-1}f(\mathbf{x}(\tau)) \leq t  f(\mathbf{x}^{\ast})   + \alpha \Vert \mathbf{x}^{\ast} - \mathbf{x}(-1)\Vert^{2} +\frac{\alpha}{2\alpha - \beta^2} \Vert \mathbf{g}(\mathbf{x}^\ast)\Vert^2 -  \frac{1}{2}\Vert \mathbf{Q}(t)\Vert^{2}$.
\end{enumerate}
\end{Lem}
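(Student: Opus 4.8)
The plan is to sum the per-iteration bound of \cref{lm:dpp-bound} over $\tau \in \{0,1,\ldots,t-1\}$ and exploit that every term on the right-hand side telescopes. Adding the inequality of \cref{lm:dpp-bound} over $\tau$, the drift sum collapses by definition to $\sum_{\tau=0}^{t-1}\Delta(\tau) = L(t) - L(0) = \frac{1}{2}\Vert\mathbf{Q}(t)\Vert^2 - \frac{1}{2}\Vert\mathbf{Q}(0)\Vert^2$, the $\alpha$-bracket collapses to $\alpha\big[\Vert\mathbf{x}^\ast - \mathbf{x}(-1)\Vert^2 - \Vert\mathbf{x}^\ast - \mathbf{x}(t-1)\Vert^2\big]$, and the $\mathbf{g}$-bracket collapses to $\frac{1}{2}\big[\Vert\mathbf{g}(\mathbf{x}(t-1))\Vert^2 - \Vert\mathbf{g}(\mathbf{x}(-1))\Vert^2\big]$. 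Moving the $\frac{1}{2}\Vert\mathbf{Q}(t)\Vert^2$ and $-\frac{1}{2}\Vert\mathbf{Q}(0)\Vert^2$ terms to the right produces a single master inequality $\sum_{\tau=0}^{t-1} f(\mathbf{x}(\tau)) \le t f(\mathbf{x}^\ast) + \alpha\Vert\mathbf{x}^\ast - \mathbf{x}(-1)\Vert^2 - \alpha\Vert\mathbf{x}^\ast - \mathbf{x}(t-1)\Vert^2 + \frac{1}{2}\Vert\mathbf{g}(\mathbf{x}(t-1))\Vert^2 - \frac{1}{2}\Vert\mathbf{Q}(t)\Vert^2 + \frac{1}{2}\Vert\mathbf{Q}(0)\Vert^2 - \frac{1}{2}\Vert\mathbf{g}(\mathbf{x}(-1))\Vert^2$. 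I would then note that part 3 of \cref{lm:virtual-queue} at $t=0$ gives $\Vert\mathbf{Q}(0)\Vert^2 \le \Vert\mathbf{g}(\mathbf{x}(-1))\Vert^2$, so the final two terms are jointly nonpositive and can be dropped in both parts.

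For part 1, I would discard the two remaining nonpositive terms: $-\alpha\Vert\mathbf{x}^\ast - \mathbf{x}(t-1)\Vert^2 \le 0$, and, invoking part 3 of \cref{lm:virtual-queue} at $t\ge 1$ (namely $\Vert\mathbf{Q}(t)\Vert^2 \ge \Vert\mathbf{g}(\mathbf{x}(t-1))\Vert^2$), the combination $\frac{1}{2}\Vert\mathbf{g}(\mathbf{x}(t-1))\Vert^2 - \frac{1}{2}\Vert\mathbf{Q}(t)\Vert^2 \le 0$. This immediately yields $\sum_{\tau=0}^{t-1} f(\mathbf{x}(\tau)) \le t f(\mathbf{x}^\ast) + \alpha\Vert\mathbf{x}^\ast - \mathbf{x}(-1)\Vert^2$, using only $\alpha \ge \frac{1}{2}\beta^2$ (required by \cref{lm:dpp-bound}) and $\alpha > 0$.

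For part 2, instead of discarding $-\frac{1}{2}\Vert\mathbf{Q}(t)\Vert^2$ I would retain it and bound the coupled quantity $-\alpha\Vert\mathbf{x}^\ast - \mathbf{x}(t-1)\Vert^2 + \frac{1}{2}\Vert\mathbf{g}(\mathbf{x}(t-1))\Vert^2$. This is where the real work lies. By the triangle inequality and Lipschitz continuity of $\mathbf{g}$ with modulus $\beta$, $\Vert\mathbf{g}(\mathbf{x}(t-1))\Vert \le \Vert\mathbf{g}(\mathbf{x}^\ast)\Vert + \beta\Vert\mathbf{x}(t-1) - \mathbf{x}^\ast\Vert$; substituting and writing $b := \Vert\mathbf{x}(t-1) - \mathbf{x}^\ast\Vert \ge 0$ and $a := \Vert\mathbf{g}(\mathbf{x}^\ast)\Vert$, the expression is a quadratic in $b$ with leading coefficient $\frac{1}{2}\beta^2 - \alpha$, which is strictly negative exactly because part 2 assumes $\alpha > \frac{1}{2}\beta^2$. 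Maximizing this concave quadratic over $b$ (equivalently completing the square) gives the uniform bound $\frac{1}{2}a^2 - \frac{(\beta a)^2}{4\left(\frac{1}{2}\beta^2 - \alpha\right)} = \frac{\alpha}{2\alpha - \beta^2}\,\Vert\mathbf{g}(\mathbf{x}^\ast)\Vert^2$, and substituting this back into the master inequality yields part 2.

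The only genuinely nontrivial step is this quadratic maximization; the rest is telescoping bookkeeping combined with the two sign facts from \cref{lm:virtual-queue}. The place demanding care is precisely the sign of the denominator: the maximization is valid and the denominator $2\alpha - \beta^2$ is positive only under the strict hypothesis $\alpha > \frac{1}{2}\beta^2$, which is why part 2 needs the strict inequality while part 1 permits equality.
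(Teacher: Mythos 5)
Your proposal is correct and follows essentially the same route as the paper: the same telescoping of \cref{lm:dpp-bound} to the master inequality, the same use of part 3 of \cref{lm:virtual-queue} for part 1, and for part 2 the same quadratic $(\tfrac{1}{2}\beta^{2}-\alpha)b^{2}+\beta ab+\tfrac{1}{2}a^{2}$ bounded by $\frac{\alpha}{2\alpha-\beta^{2}}\Vert\mathbf{g}(\mathbf{x}^{\ast})\Vert^{2}$. The only cosmetic differences are that you reach that quadratic via the triangle inequality plus squaring (the paper expands $\Vert\mathbf{g}(\mathbf{x}(t-1))-\mathbf{g}(\mathbf{x}^{\ast})+\mathbf{g}(\mathbf{x}^{\ast})\Vert^{2}$ and applies Cauchy--Schwarz and Lipschitz continuity) and that you phrase the final step as maximizing the concave quadratic rather than explicitly completing the square.
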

\begin{proof}
By \cref{lm:dpp-bound}, we have $\Delta(\tau) + f(\mathbf{x}(\tau)) \leq f(\mathbf{x}^{\ast}) + \alpha [\Vert \mathbf{x}^{\ast} - \mathbf{x}(\tau-1)\Vert^{2} - \Vert \mathbf{x}^{\ast} - \mathbf{x}(\tau) \Vert ] + \frac{1}{2} [ \Vert \mathbf{g}(\mathbf{x}(\tau))\Vert^{2} -  \Vert \mathbf{g}(\mathbf{x}(\tau-1))\Vert^{2} ]$ for all $\tau\in\{0,1,2,\ldots \}$. Summing over $\tau\in\{0,1,\ldots,t-1\}$ yields
\begin{align*}
&\sum_{\tau=0}^{t-1} \Delta(\tau) + \sum_{\tau=0}^{t-1} f(\mathbf{x}(\tau)) \leq t f(\mathbf{x}^{\ast}) + \alpha \sum_{\tau=0}^{t-1}[ \Vert \mathbf{x}^{\ast} - \mathbf{x}(\tau-1)\Vert^{2} -  \Vert \mathbf{x}^{\ast} - \mathbf{x}(\tau)\Vert^{2}]  \\& \qquad \qquad \qquad \qquad  \qquad \quad  + \frac{1}{2} \sum_{\tau=0}^{t-1} [ \Vert \mathbf{g}(\mathbf{x}(\tau))\Vert^{2} -  \Vert \mathbf{g}(\mathbf{x}(\tau-1))\Vert^{2} ].
\end{align*}
Recalling that $\Delta(\tau) = L(\tau+1) - L(\tau)$ and simplifying summations yields
\begin{align*}
&L(t) - L(0) + \sum_{\tau=0}^{t-1} f(\mathbf{x}(\tau)) \\
\leq &t f(\mathbf{x}^{\ast}) + \alpha \Vert \mathbf{x}^{\ast} - \mathbf{x}(-1)\Vert^{2} -  \alpha\Vert \mathbf{x}^{\ast} - \mathbf{x}(t-1)\Vert^{2} + \frac{1}{2}\Vert \mathbf{g}(\mathbf{x}(t-1))\Vert^{2} - \frac{1}{2} \Vert \mathbf{g}(\mathbf{x}(-1))\Vert^{2}.
\end{align*}
Rearranging terms; and substituting $L(0) = \frac{1}{2} \Vert \mathbf{Q}(0)\Vert^{2} $ and $L(t) = \frac{1}{2} \Vert \mathbf{Q}(t)\Vert^{2}$ yields
\begin{align}
&\sum_{\tau=0}^{t-1} f(\mathbf{x}(\tau)) \nonumber \\
\leq & t f(\mathbf{x}^{\ast}) + \alpha \Vert \mathbf{x}^{\ast} - \mathbf{x}(-1)\Vert^{2}  -  \alpha\Vert \mathbf{x}^{\ast} - \mathbf{x}(t-1)\Vert^{2} + \frac{1}{2}\Vert \mathbf{g}(\mathbf{x}(t-1))\Vert^{2}\nonumber \\ & - \frac{1}{2}\Vert \mathbf{g}(\mathbf{x}(-1))\Vert^{2} +\frac{1}{2} \Vert \mathbf{Q}(0)\Vert^2 - \frac{1}{2} \Vert \mathbf{Q}(t)\Vert^2 \nonumber \\ \overset{(a)}{\leq}&t f(\mathbf{x}^{\ast}) + \alpha \Vert \mathbf{x}^{\ast} - \mathbf{x}(-1)\Vert^{2}  -  \alpha\Vert \mathbf{x}^{\ast} - \mathbf{x}(t-1)\Vert^{2} + \frac{1}{2}\Vert \mathbf{g}(\mathbf{x}(t-1))\Vert^{2}  - \frac{1}{2} \Vert \mathbf{Q}(t)\Vert^2, \label{eq:pf-obj-diff-bound-eq1}
\end{align}
where (a) follows from the fact that $\Vert \mathbf{Q}(0)\Vert \leq \Vert \mathbf{g}(\mathbf{x}(-1))\Vert$, i.e., part 3 in \cref{lm:virtual-queue}.

Next, we present the proof of both parts:
\begin{enumerate}
\item This part follows from the observation that the equation \eqref{eq:pf-obj-diff-bound-eq1} can be further simplified as 
\begin{align*}
\sum_{\tau=0}^{t-1} f(\mathbf{x}(\tau))\overset{(a)}{\leq} & t f(\mathbf{x}^{\ast}) + \alpha \Vert \mathbf{x}^{\ast} - \mathbf{x}(-1)\Vert^{2}  + \frac{1}{2}\Vert \mathbf{g}(\mathbf{x}(t-1))\Vert^{2}- \frac{1}{2} \Vert \mathbf{Q}(t)\Vert^2 \\
\overset{(b)}{\leq}& t f(\mathbf{x}^{\ast}) + \alpha \Vert \mathbf{x}^{\ast} - \mathbf{x}(-1)\Vert^{2},  
\end{align*}
where (a) follows by ignoring the non-positive term $-\alpha \Vert\mathbf{x}^\ast -\mathbf{x}(t-1)\Vert^2$ on the right side and (b) follows from the fact that $\Vert \mathbf{Q}(t)\Vert \geq \Vert \mathbf{g}(\mathbf{x}(t-1))\Vert$, i.e., part 3 in \cref{lm:virtual-queue}.
\item  This part follows by rewriting the equation \eqref{eq:pf-obj-diff-bound-eq1} as 
\begin{align*}
&\sum_{\tau=0}^{t-1} f(\mathbf{x}(\tau))\\
\leq &t f(\mathbf{x}^{\ast}) + \alpha \Vert \mathbf{x}^{\ast} - \mathbf{x}(-1)\Vert^{2} -  \alpha\Vert \mathbf{x}^{\ast} - \mathbf{x}(t-1)\Vert^{2} + \frac{1}{2}\Vert \mathbf{g}(\mathbf{x}(t-1)) - \mathbf{g}(\mathbf{x}^\ast) + \mathbf{g}(\mathbf{x}^\ast)\Vert^{2} \\ &- \frac{1}{2} \Vert \mathbf{Q}(t)\Vert^{2}\\
=&t f(\mathbf{x}^{\ast}) + \alpha \Vert \mathbf{x}^{\ast} - \mathbf{x}(-1)\Vert^{2} -  \alpha\Vert \mathbf{x}^{\ast} - \mathbf{x}(t-1)\Vert^{2} + \frac{1}{2}\Vert \mathbf{g}(\mathbf{x}(t-1)) - \mathbf{g}(\mathbf{x}^\ast)\Vert^{2} \\& + \mathbf{g}^T(\mathbf{x}^\ast)[\mathbf{g}(\mathbf{x}(t-1)) - \mathbf{g}(\mathbf{x}^\ast)]  + \frac{1}{2} \Vert \mathbf{g}(\mathbf{x}^\ast)\Vert^2- \frac{1}{2} \Vert \mathbf{Q}(t)\Vert^{2}\\
\overset{(a)}{\leq} &t f(\mathbf{x}^{\ast}) + \alpha \Vert \mathbf{x}^{\ast} - \mathbf{x}(-1)\Vert^{2} -  \alpha\Vert \mathbf{x}^{\ast} - \mathbf{x}(t-1)\Vert^{2} + \frac{1}{2}\Vert \mathbf{g}(\mathbf{x}(t-1)) - \mathbf{g}(\mathbf{x}^\ast)\Vert^{2} \\& + \Vert \mathbf{g}(\mathbf{x}^\ast) \Vert \Vert \mathbf{g}(\mathbf{x}(t-1)) - \mathbf{g}(\mathbf{x}^\ast)\Vert  + \frac{1}{2} \Vert \mathbf{g}(\mathbf{x}^\ast)\Vert^2- \frac{1}{2} \Vert \mathbf{Q}(t)\Vert^{2}\\
\overset{(b)}{\leq} &t f(\mathbf{x}^{\ast}) + \alpha \Vert \mathbf{x}^{\ast} - \mathbf{x}(-1)\Vert^{2} -  \alpha\Vert \mathbf{x}^{\ast} - \mathbf{x}(t-1)\Vert^{2} + \frac{1}{2} \beta^2 \Vert\mathbf{x}^\ast - \mathbf{x}(t-1)\Vert^{2} \\& + \beta \Vert \mathbf{g}(\mathbf{x}^\ast) \Vert \Vert \mathbf{x}^\ast - \mathbf{x}(t-1)\Vert  + \frac{1}{2} \Vert \mathbf{g}(\mathbf{x}^\ast)\Vert^2- \frac{1}{2} \Vert \mathbf{Q}(t)\Vert^{2}\\
=& t f(\mathbf{x}^{\ast}) + \alpha \Vert \mathbf{x}^{\ast} - \mathbf{x}(-1)\Vert^{2} -  \big(\alpha- \frac{1}{2}\beta^{2}\big)\Big[ \Vert \mathbf{x}^{\ast} - \mathbf{x}(t-1)\Vert - \frac{1}{2}\frac{\beta}{\alpha -\frac{1}{2}\beta^{2}} \Vert \mathbf{g}(\mathbf{x}^\ast)\Vert \Big]^2 \\& + \frac{\alpha}{2\alpha - \beta^2} \Vert \mathbf{g}(\mathbf{x}^\ast)\Vert^2- \frac{1}{2} \Vert \mathbf{Q}(t)\Vert^{2}
\end{align*}
\begin{align*}
 \overset{(c)}{\leq} &t f(\mathbf{x}^{\ast}) + \alpha \Vert \mathbf{x}^{\ast} - \mathbf{x}(-1)\Vert^{2} + \frac{\alpha}{2\alpha - \beta^2} \Vert \mathbf{g}(\mathbf{x}^\ast)\Vert^2- \frac{1}{2} \Vert \mathbf{Q}(t)\Vert^{2},
\end{align*}
where (a) follows from Cauchy-Schwarz inequality; (b) follows from the fact that $\Vert \mathbf{g}(\mathbf{x}(t-1)) - \mathbf{g}(\mathbf{x}^\ast) \Vert \leq \beta \Vert  \mathbf{x}^\ast - \mathbf{x}(t-1) \Vert$, which further follows from the assumption that $\mathbf{g}(\mathbf{x})$ is Lipschitz continuous with modulus $\beta$; and (c) follows from the fact that $\alpha > \frac{1}{2}\beta^{2}$.
\end{enumerate}
\end{proof}

\begin{Thm}[Objective Value Violations]
Let $\mathbf{x}^{\ast}$ be an optimal solution of the problem \eqref{eq:program-objective}-\eqref{eq:program-set-constraint}.  If $\alpha \geq  \frac{1}{2}\beta^{2}$ in \cref{alg:dpp}, for all $t\geq1$, we have 
\begin{align*}
f(\overline{\mathbf{x}}(t))  \leq  f(\mathbf{x}^{\ast}) +\frac{\alpha}{t} \Vert \mathbf{x}^{\ast} - \mathbf{x}(-1)\Vert^{2},
\end{align*}
where $\beta$ is defined in \cref{as:basic}.
\end{Thm}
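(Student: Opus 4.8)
The plan is to recognize that this theorem is an immediate consequence of part 1 of \cref{lm:obj-diff-bound-from-dpp-bound} together with the convexity of the objective $f$. The heavy lifting --- bounding the drift-plus-penalty expression in \cref{lm:dpp-bound} and telescoping it while using the virtual-queue inequalities of \cref{lm:virtual-queue} to discard the queue and initial-constraint terms --- has already been carried out in the preceding lemmas, so what remains is a short averaging argument via Jensen's inequality. Note that part 1 of \cref{lm:obj-diff-bound-from-dpp-bound} is the correct tool here (rather than the sharper part 2), since it holds under the weaker hypothesis $\alpha \geq \frac{1}{2}\beta^2$ that matches the theorem's assumption.

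Concretely, I would proceed in three steps. First, I would observe that $\overline{\mathbf{x}}(t) = \frac{1}{t}\sum_{\tau=0}^{t-1}\mathbf{x}(\tau)$ is a convex combination of the iterates $\mathbf{x}(0),\ldots,\mathbf{x}(t-1)$, each of which lies in $\mathcal{X}$ by construction of \cref{alg:dpp}; since $\mathcal{X}$ is convex, $\overline{\mathbf{x}}(t)\in\mathcal{X}$ and $f(\overline{\mathbf{x}}(t))$ is well defined. Second, I would invoke Jensen's inequality for the convex function $f$ to obtain $f(\overline{\mathbf{x}}(t)) \leq \frac{1}{t}\sum_{\tau=0}^{t-1} f(\mathbf{x}(\tau))$. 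Third, I would apply part 1 of \cref{lm:obj-diff-bound-from-dpp-bound}, which bounds $\sum_{\tau=0}^{t-1} f(\mathbf{x}(\tau))$ by $t f(\mathbf{x}^{\ast}) + \alpha \Vert \mathbf{x}^{\ast}-\mathbf{x}(-1)\Vert^2$; dividing this bound by $t$ and chaining it with the Jensen estimate yields exactly the claimed inequality $f(\overline{\mathbf{x}}(t)) \leq f(\mathbf{x}^{\ast}) + \frac{\alpha}{t}\Vert \mathbf{x}^{\ast}-\mathbf{x}(-1)\Vert^2$.

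As for the main obstacle: there is essentially none at the level of this theorem, precisely because the difficulty has been front-loaded into the two lemmas above. The only point that requires a moment of care is the direction of the averaging inequality --- Jensen gives an upper bound on $f$ evaluated at an average by the average of the $f$-values, which is exactly the direction needed, and it is crucial here that $f$ is \emph{convex} (not merely continuous). The $O(1/t)$ decay of the objective-value violation is then transparent: the right-hand side is the fixed constant $\alpha\Vert\mathbf{x}^{\ast}-\mathbf{x}(-1)\Vert^2$ divided by $t$, so the averaged primal iterate achieves an objective no worse than optimal up to an error vanishing at rate $1/t$.
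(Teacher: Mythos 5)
Your proposal is correct and follows exactly the paper's own proof: invoke part 1 of \cref{lm:obj-diff-bound-from-dpp-bound}, divide by $t$, and combine with Jensen's inequality applied to the convex function $f$ at the averaged iterate $\overline{\mathbf{x}}(t)$. The only addition is your explicit remark that $\overline{\mathbf{x}}(t)\in\mathcal{X}$ by convexity of $\mathcal{X}$, which the paper leaves implicit.
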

\begin{proof}
Fix $t\geq$1. By part 1 in \cref{lm:obj-diff-bound-from-dpp-bound}, we have
\begin{align*}
&\sum_{\tau=0}^{t-1}f(\mathbf{x}(\tau)) \leq t f(\mathbf{x}^{\ast}) + \alpha \Vert \mathbf{x}^{\ast} - \mathbf{x}(-1)\Vert^{2}\\
\Rightarrow& \frac{1}{t}\sum_{\tau=0}^{t-1}f(\mathbf{x}(\tau)) \leq f(\mathbf{x}^{\ast}) +\frac{\alpha}{t} \Vert \mathbf{x}^{\ast} - \mathbf{x}(-1)\Vert^{2}.
\end{align*}

Since $\overline{\mathbf{x}}(t) = \frac{1}{t}\sum_{\tau=0}^{t-1} \mathbf{x}(\tau)$ and $f(\mathbf{x})$ is convex, by Jensen's inequality it follows that
\begin{align*}
f(\overline{\mathbf{x}}(t)) \leq \frac{1}{t}\sum_{\tau=0}^{t-1} f(\mathbf{x}(\tau)). 
\end{align*}
\end{proof}

The above theorem shows that the error gap between $f(\overline{\mathbf{x}}(t))$ and the optimal value $f(\mathbf{x}^*)$ is at most $O(1/t)$.  This holds for any initial guess vector $\mathbf{x}(-1) \in \mathcal{X}$.  Of course, choosing $\mathbf{x}(-1)$ close to $\mathbf{x}^*$ is desirable because it reduces 
the coefficient $\alpha\Vert \mathbf{x}^*- \mathbf{x}(-1) \Vert^2$.  

\subsection{Constraint Violations}

\begin{Lem}\label{lm:queue-constraint-inequality}
Let $\mathbf{Q}(t), t\in\{0,1,\ldots\}$ be the sequence generated by \cref{alg:dpp}.  
For any $t\geq 1$, 
\[ Q_k(t) \geq   \displaystyle{\sum_{\tau=0}^{t-1} g_k(\mathbf{x}(\tau))}, \forall k\in\{1,2,\ldots,m\}. \]
\end{Lem}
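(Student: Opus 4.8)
The plan is to derive the claim from a single one-sided consequence of the virtual queue update and then telescope. Fix an arbitrary index $k\in\{1,2,\ldots,m\}$ and recall the update rule
\[
Q_k(\tau+1) = \max\{-g_k(\mathbf{x}(\tau)),\, Q_k(\tau) + g_k(\mathbf{x}(\tau))\}.
\]
Because a maximum dominates either of its arguments, the second argument alone gives the clean inequality $Q_k(\tau+1) \geq Q_k(\tau) + g_k(\mathbf{x}(\tau))$, valid for every $\tau\in\{0,1,2,\ldots\}$. This discards the $-g_k(\mathbf{x}(\tau))$ branch entirely; it is the only algebraic observation needed.

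Next I would rearrange this into the telescoping form $Q_k(\tau+1) - Q_k(\tau) \geq g_k(\mathbf{x}(\tau))$ and sum it over $\tau\in\{0,1,\ldots,t-1\}$. The left side collapses to $Q_k(t) - Q_k(0)$, yielding
\[
Q_k(t) - Q_k(0) \geq \sum_{\tau=0}^{t-1} g_k(\mathbf{x}(\tau)).
\]
(Equivalently, one can run an induction on $t$ with the same per-step inequality as the inductive step, but the telescoping sum is more direct.)

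Finally, I would invoke the nonnegativity of the initial queue value, $Q_k(0)\geq 0$, which is exactly part~1 of \cref{lm:virtual-queue} (and is also immediate from the initialization $Q_k(0)=\max\{0,-g_k(\mathbf{x}(-1))\}$). Adding $Q_k(0)\geq 0$ to the displayed inequality gives $Q_k(t) \geq \sum_{\tau=0}^{t-1} g_k(\mathbf{x}(\tau))$, and since $k$ was arbitrary the result holds for all $k\in\{1,2,\ldots,m\}$. I do not anticipate any real obstacle here: the entire content is the observation that the max update forces $Q_k$ to accumulate at least the running sum of constraint values, so the proof is a short telescoping argument with no case analysis or appeal to the Lipschitz/strong-convexity machinery used earlier.
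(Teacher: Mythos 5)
Your proposal is correct and matches the paper's own proof essentially line for line: both drop the $-g_k(\mathbf{x}(\tau))$ branch of the max to get $Q_k(\tau+1)\geq Q_k(\tau)+g_k(\mathbf{x}(\tau))$, telescope the sum over $\tau\in\{0,\ldots,t-1\}$, and finish with $Q_k(0)\geq 0$. No gaps.
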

\begin{proof}
Fix $k\in\{1,2,\ldots,m\}$ and $t \geq 1$.  For any $\tau \in \{0, \ldots, t-1\}$ the update rule of \cref{alg:dpp} gives: 
\begin{align*}
Q_k(\tau+1) &= \max\{-g_{k}(\mathbf{x}(\tau)), Q_k(\tau)+g_k(\mathbf{x}(\tau))\} \\
&\geq Q_k(\tau) + g_k(\mathbf{x}(\tau)). 
\end{align*}
Hence, $Q_k(\tau+1) - Q_k(\tau) \geq g_k(\mathbf{x}(\tau))$.  
Summing over $\tau \in \{0, \ldots, t-1\}$ and using $Q_k(0)\geq 0$ gives the result. 
\end{proof}

\begin{Lem}\label{lm:obj-diff-bound-from-strong-duality}
Let $\mathbf{x}^{\ast}$ be an optimal solution of the problem \eqref{eq:program-objective}-\eqref{eq:program-set-constraint} and $\boldsymbol{\lambda}^\ast$ be a Lagrange multiplier vector satisfying \cref{as:strong-duality}. Let $\mathbf{x}(t), \mathbf{Q}(t), t\in\{0,1,\ldots\}$ be sequences generated by \cref{alg:dpp}. Then,
\begin{align*}
\sum_{\tau=0}^{t-1} f(\mathbf{x}(\tau)) \geq t f(\mathbf{x}^\ast) -  \Vert \boldsymbol{\lambda}^\ast\Vert \Vert \mathbf{Q}(t)\Vert, \quad \forall t\geq 1. 
\end{align*}
\end{Lem}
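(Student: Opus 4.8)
The plan is to exploit the strong duality guaranteed by \cref{as:strong-duality} to obtain a per-iteration lower bound on $f(\mathbf{x}(\tau))$, and then convert the accumulated constraint terms into a queue bound using \cref{lm:queue-constraint-inequality}. The starting point is the defining property of the Lagrange multiplier $\boldsymbol{\lambda}^\ast$: since $q(\boldsymbol{\lambda}^\ast) = \min_{\mathbf{x}\in\mathcal{X}}\{f(\mathbf{x}) + (\boldsymbol{\lambda}^\ast)^T\mathbf{g}(\mathbf{x})\} = f(\mathbf{x}^\ast)$, every iterate $\mathbf{x}(\tau)\in\mathcal{X}$ satisfies $f(\mathbf{x}(\tau)) + (\boldsymbol{\lambda}^\ast)^T\mathbf{g}(\mathbf{x}(\tau)) \geq f(\mathbf{x}^\ast)$, i.e. $f(\mathbf{x}(\tau)) \geq f(\mathbf{x}^\ast) - (\boldsymbol{\lambda}^\ast)^T\mathbf{g}(\mathbf{x}(\tau))$.

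First I would sum this inequality over $\tau\in\{0,1,\ldots,t-1\}$, which yields
\[
\sum_{\tau=0}^{t-1} f(\mathbf{x}(\tau)) \geq t f(\mathbf{x}^\ast) - (\boldsymbol{\lambda}^\ast)^T \sum_{\tau=0}^{t-1}\mathbf{g}(\mathbf{x}(\tau)).
\]
The remaining task is to control the accumulated constraint term $(\boldsymbol{\lambda}^\ast)^T\sum_{\tau=0}^{t-1}\mathbf{g}(\mathbf{x}(\tau))$ from above. Here I would invoke \cref{lm:queue-constraint-inequality}, which gives $Q_k(t)\geq \sum_{\tau=0}^{t-1} g_k(\mathbf{x}(\tau))$ for each $k$. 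The key observation, and the one point requiring care, is that multiplying this componentwise inequality by the \emph{nonnegative} multiplier $\lambda_k^\ast\geq \mathbf{0}$ (guaranteed by \cref{as:strong-duality}) preserves its direction; summing over $k$ then gives $(\boldsymbol{\lambda}^\ast)^T\sum_{\tau=0}^{t-1}\mathbf{g}(\mathbf{x}(\tau)) \leq (\boldsymbol{\lambda}^\ast)^T\mathbf{Q}(t)$.

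Finally I would apply the Cauchy--Schwarz inequality to bound $(\boldsymbol{\lambda}^\ast)^T\mathbf{Q}(t) \leq \Vert\boldsymbol{\lambda}^\ast\Vert\,\Vert\mathbf{Q}(t)\Vert$, and substitute back to conclude the claimed lower bound. I do not expect any genuine obstacle here: the argument is a short chain of three ingredients (strong duality, the queue inequality, Cauchy--Schwarz). The only subtlety worth flagging explicitly in the writeup is the sign condition $\boldsymbol{\lambda}^\ast\geq\mathbf{0}$, since the conversion from the summed constraint values to the queue backlog relies crucially on the multiplier being nonnegative; without it the inequality direction would not be preserved.
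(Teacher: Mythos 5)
Your proposal is correct and follows exactly the paper's own argument: strong duality gives the per-iteration bound $f(\mathbf{x}(\tau)) \geq f(\mathbf{x}^\ast) - (\boldsymbol{\lambda}^\ast)^T\mathbf{g}(\mathbf{x}(\tau))$, summing and applying \cref{lm:queue-constraint-inequality} with $\lambda_k^\ast \geq 0$ converts the accumulated constraint terms into $(\boldsymbol{\lambda}^\ast)^T\mathbf{Q}(t)$, and Cauchy--Schwarz finishes. Your explicit flagging of the nonnegativity of $\boldsymbol{\lambda}^\ast$ is precisely the point the paper also relies on in its step $(a)$.
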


\begin{proof}
The proof is similar to a related result in \cite{YuNeely15CDC} for the DPP algorithm. Define Lagrangian dual function $q(\boldsymbol{\lambda}) = \min\limits_{\mathbf{x}\in \mathcal{X}}\{f(\mathbf{x})+ \sum_{k=1}^m \lambda_k g_k(\mathbf{x})\}$. For all $\tau\in\{0,1,\ldots\}$, by \cref{as:strong-duality}, we have
\begin{align*}
f(\mathbf{x}^{\ast}) = q(\boldsymbol{\lambda}^{\ast}) \overset{(a)}{\leq}  f(\mathbf{x}(\tau)) + \sum_{k=1}^m \lambda_k^\ast g_k(\mathbf{x}(\tau)),
\end{align*} 
where (a) follows the definition of $q(\boldsymbol{\lambda}^{\ast})$. Thus, we have
\begin{align*}
f(\mathbf{x}(\tau))  \geq f(\mathbf{x}^\ast) -\sum_{k=1}^m \lambda_k^\ast g_k(\mathbf{x}(\tau)),\forall \tau \in\{0,1,\ldots\}.
\end{align*} 

Summing over $\tau\in \{0,1,\ldots, t-1\}$ yields 
\begin{align}
\sum_{\tau=0}^{t-1} f(\mathbf{x}(\tau)) \geq &t f(\mathbf{x}^\ast) -  \sum_{\tau=0}^{t-1} \sum_{k=1}^m\lambda_k^\ast g_k(\mathbf{x}(\tau))\nonumber \\
=&t f(\mathbf{x}^\ast)  - \sum_{k=1}^m  \lambda_k^\ast \Big[\sum_{\tau=0}^{t-1}g_k(\mathbf{x}(\tau))\Big] \nonumber\\
\overset{(a)}{\geq}& t f(\mathbf{x}^\ast)  -\sum_{k=1}^m  \lambda_k^\ast Q_k(t) \nonumber\\
\overset{(b)}{\geq}& t f(\mathbf{x}^\ast)  -\Vert \boldsymbol{\lambda}^\ast\Vert \Vert \mathbf{Q}(t)\Vert, \nonumber
\end{align}
where $(a)$ follows from \cref{lm:queue-constraint-inequality} and the fact that $\lambda_k^\ast \geq 0, \forall k\in\{1,2,\ldots, m\}$; and $(b)$ follows from the Cauchy-Schwarz inequality. \end{proof}

\begin{Lem} \label{lm:queue-bound}
Let $\mathbf{x}^{\ast}$ be an optimal solution of the problem \eqref{eq:program-objective}-\eqref{eq:program-set-constraint} and $\boldsymbol{\lambda}^\ast$ be a Lagrange multiplier vector satisfying \cref{as:strong-duality}. If $\alpha > \frac{\beta^{2}}{2}$ in \cref{alg:dpp}, then for all $t\geq 1$, the virtual queue vector satisfies
\begin{align*}
\Vert \mathbf{Q}(t) \Vert \leq  2 \Vert \boldsymbol{\lambda}^\ast \Vert + \sqrt{ 2\alpha} \Vert \mathbf{x}^{\ast} - \mathbf{x}(-1)\Vert + \sqrt{\frac{\alpha}{\alpha - \frac{1}{2}\beta^{2}}} \Vert \mathbf{g}(\mathbf{x}^\ast)\Vert, 
\end{align*}
where $\beta$ is defined in \cref{as:basic}.
\end{Lem}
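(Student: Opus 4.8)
The plan is to sandwich the running sum $\sum_{\tau=0}^{t-1} f(\mathbf{x}(\tau))$ between two already-established bounds and read off a scalar quadratic inequality for the single unknown $\Vert \mathbf{Q}(t)\Vert$. Since $\alpha > \frac{1}{2}\beta^2$, part 2 of \cref{lm:obj-diff-bound-from-dpp-bound} applies and gives the upper bound
\[
\sum_{\tau=0}^{t-1} f(\mathbf{x}(\tau)) \leq t f(\mathbf{x}^{\ast}) + \alpha \Vert \mathbf{x}^{\ast} - \mathbf{x}(-1)\Vert^{2} + \frac{\alpha}{2\alpha - \beta^2} \Vert \mathbf{g}(\mathbf{x}^\ast)\Vert^2 - \tfrac{1}{2}\Vert \mathbf{Q}(t)\Vert^{2},
\]
while \cref{lm:obj-diff-bound-from-strong-duality} gives the matching lower bound $\sum_{\tau=0}^{t-1} f(\mathbf{x}(\tau)) \geq t f(\mathbf{x}^\ast) - \Vert \boldsymbol{\lambda}^\ast\Vert \Vert \mathbf{Q}(t)\Vert$. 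Chaining these two inequalities cancels the common $t f(\mathbf{x}^\ast)$ term exactly, leaving a relation that involves $\Vert \mathbf{Q}(t)\Vert$ only.

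Next I would rearrange into the clean quadratic form
\[
\tfrac{1}{2}\Vert \mathbf{Q}(t)\Vert^{2} - \Vert \boldsymbol{\lambda}^\ast\Vert\,\Vert \mathbf{Q}(t)\Vert - C \leq 0,
\qquad C := \alpha \Vert \mathbf{x}^{\ast} - \mathbf{x}(-1)\Vert^{2} + \frac{\alpha}{2\alpha - \beta^2} \Vert \mathbf{g}(\mathbf{x}^\ast)\Vert^2 \geq 0,
\]
and complete the square, writing it as $\big(\Vert \mathbf{Q}(t)\Vert - \Vert \boldsymbol{\lambda}^\ast\Vert\big)^2 \leq \Vert \boldsymbol{\lambda}^\ast\Vert^2 + 2C$. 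Taking square roots yields $\Vert \mathbf{Q}(t)\Vert \leq \Vert \boldsymbol{\lambda}^\ast\Vert + \sqrt{\Vert \boldsymbol{\lambda}^\ast\Vert^2 + 2C}$, and then applying the elementary subadditivity $\sqrt{a+b}\leq \sqrt{a}+\sqrt{b}$ to the radical (first to split off $\Vert\boldsymbol{\lambda}^\ast\Vert$, then once more to split the two terms inside $2C$) produces the three-term sum in the statement.

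The only point requiring a touch of care is matching the constants after the final square-root split: the term $2C$ contributes $\sqrt{2\alpha}\,\Vert \mathbf{x}^{\ast} - \mathbf{x}(-1)\Vert$ from the first piece and $\sqrt{\tfrac{2\alpha}{2\alpha - \beta^2}}\,\Vert \mathbf{g}(\mathbf{x}^\ast)\Vert$ from the second, and one must observe the identity $\frac{2\alpha}{2\alpha - \beta^2} = \frac{\alpha}{\alpha - \frac{1}{2}\beta^2}$ to recover the exact coefficient $\sqrt{\alpha/(\alpha - \tfrac{1}{2}\beta^2)}$ appearing in the claim; this is also where the strict hypothesis $\alpha > \tfrac{1}{2}\beta^2$ is needed, since it keeps the denominator positive and finite. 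I do not anticipate a genuine obstacle here: the argument is a deterministic combination of the two preceding lemmas followed by solving a one-variable quadratic, so the entire difficulty has already been absorbed into establishing those bounds (particularly the drift-plus-penalty estimate of \cref{lm:dpp-bound}).
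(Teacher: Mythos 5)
Your proposal is correct and follows the paper's own argument essentially step for step: combining part 2 of \cref{lm:obj-diff-bound-from-dpp-bound} with \cref{lm:obj-diff-bound-from-strong-duality}, cancelling $tf(\mathbf{x}^\ast)$, completing the square in $\Vert \mathbf{Q}(t)\Vert$, and finishing with the subadditivity of the square root (the paper uses $\sqrt{a+b+c}\leq\sqrt{a}+\sqrt{b}+\sqrt{c}$ in one step where you apply it twice, which is the same thing). Your observation that $\frac{2\alpha}{2\alpha-\beta^2}=\frac{\alpha}{\alpha-\frac{1}{2}\beta^2}$ correctly accounts for the constant matching, so there is no gap.
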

\begin{proof}
Fix $t\geq 1$. By part 2 in \cref{lm:obj-diff-bound-from-dpp-bound}, we have
\begin{align*}
\sum_{\tau=0}^{t-1}f(\mathbf{x}(\tau)) \leq & t  f(\mathbf{x}^{\ast}) + \alpha \Vert \mathbf{x}^{\ast} - \mathbf{x}(-1)\Vert^{2} + \frac{\alpha}{2\alpha - \beta^2} \Vert \mathbf{g}(\mathbf{x}^\ast)\Vert^2 -  \frac{1}{2} \Vert \mathbf{Q}(t)\Vert^{2}.
\end{align*}
By \cref{lm:obj-diff-bound-from-strong-duality}, we have
\begin{align*}
\sum_{\tau=0}^{t-1} f(\mathbf{x}(\tau)) \geq tf(\mathbf{x}^\ast) -  \Vert \boldsymbol{\lambda}^\ast\Vert \Vert \mathbf{Q}(t)\Vert.
\end{align*}
Combining the last two inequalities and cancelling the common term $tf(\mathbf{x}^\ast)$ on both sides yields
\begin{align*}
 &\frac{1}{2} \Vert \mathbf{Q}(t)\Vert^{2} - \big(\alpha \Vert \mathbf{x}^{\ast} - \mathbf{x}(-1)\Vert^{2} + \frac{\alpha}{2\alpha - \beta^2} \Vert \mathbf{g}(\mathbf{x}^\ast)\Vert^2\big)\leq \Vert \boldsymbol{\lambda}^\ast\Vert \Vert \mathbf{Q}(t)\Vert \\
\Rightarrow & \big( \Vert \mathbf{Q}(t)\Vert - \Vert \boldsymbol{\lambda}^\ast\Vert \big)^{2} \leq \Vert \boldsymbol{\lambda}^\ast\Vert^{2} + 2\alpha \Vert \mathbf{x}^{\ast} - \mathbf{x}(-1)\Vert^{2} + \frac{\alpha}{\alpha - \frac{1}{2}\beta^{2}} \Vert \mathbf{g}(\mathbf{x}^\ast)\Vert^2\\
\Rightarrow & \Vert \mathbf{Q}(t)\Vert \leq \Vert \boldsymbol{\lambda}^\ast \Vert + \sqrt{\Vert \boldsymbol{\lambda}^\ast\Vert^{2} + 2\alpha \Vert \mathbf{x}^{\ast} - \mathbf{x}(-1)\Vert^{2} +  \frac{\alpha}{\alpha - \frac{1}{2}\beta^{2}} \Vert \mathbf{g}(\mathbf{x}^\ast)\Vert^2}\\
\overset{(a)}{\Rightarrow}& \Vert \mathbf{Q}(t)\Vert \leq 2 \Vert \boldsymbol{\lambda}^\ast \Vert + \sqrt{ 2\alpha} \Vert \mathbf{x}^{\ast} - \mathbf{x}(-1)\Vert + \sqrt{\frac{\alpha}{\alpha - \frac{1}{2}\beta^{2}}} \Vert \mathbf{g}(\mathbf{x}^\ast)\Vert,
\end{align*}
where (a) follows from the basic inequality $\sqrt{a+b+c}\leq \sqrt{a} + \sqrt{b} +\sqrt{c}$ for any $a,b,c\geq0$.
\end{proof}

\begin{Thm}[Constraint Violations]
Let $\mathbf{x}^{\ast}$ be an optimal solution of the problem \eqref{eq:program-objective}-\eqref{eq:program-set-constraint} and $\boldsymbol{\lambda}^\ast$ be a Lagrange multiplier vector satisfying \cref{as:strong-duality}.  If $\alpha > \frac{\beta^{2}}{2}$ in \cref{alg:dpp}, then for all $t\geq1$, the constraint functions satisfy
\begin{align*}
g_{k}(\overline{\mathbf{x}}(t)) \leq  \frac{1}{t} \Big(2 \Vert \boldsymbol{\lambda}^\ast \Vert +\sqrt{ 2\alpha} \Vert \mathbf{x}^{\ast} - \mathbf{x}(-1)\Vert + \sqrt{\frac{\alpha}{\alpha - \frac{1}{2}\beta^{2}}} \Vert \mathbf{g}(\mathbf{x}^\ast)\Vert \Big), \forall k\in\{1,2,\ldots, m\},
\end{align*}
where $\beta$ is defined in \cref{as:basic}.
\end{Thm}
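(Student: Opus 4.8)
The plan is to chain together three facts already established in the excerpt: Jensen's inequality for the convex constraint functions, the queue-constraint inequality of \cref{lm:queue-constraint-inequality}, and the uniform queue bound of \cref{lm:queue-bound}. The target quantity $g_k(\overline{\mathbf{x}}(t))$ is an evaluation at the time-average iterate, and the right-hand side of the theorem is precisely the bound on $\Vert \mathbf{Q}(t)\Vert$ divided by $t$, so the entire argument amounts to tracing how a single component $Q_k(t)$ controls the averaged constraint value.

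First I would invoke convexity of $g_k$ on $\mathcal{X}$. Since $\overline{\mathbf{x}}(t) = \frac{1}{t}\sum_{\tau=0}^{t-1}\mathbf{x}(\tau)$ is a convex combination of points $\mathbf{x}(\tau)\in\mathcal{X}$ and $\mathcal{X}$ is convex, $\overline{\mathbf{x}}(t)\in\mathcal{X}$, and Jensen's inequality gives
\begin{align*}
g_k(\overline{\mathbf{x}}(t)) \leq \frac{1}{t}\sum_{\tau=0}^{t-1} g_k(\mathbf{x}(\tau)), \quad \forall k\in\{1,2,\ldots,m\}.
\end{align*}
Next I would apply \cref{lm:queue-constraint-inequality}, which states $\sum_{\tau=0}^{t-1} g_k(\mathbf{x}(\tau)) \leq Q_k(t)$, to replace the running sum of constraint values by the single queue backlog. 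This yields $g_k(\overline{\mathbf{x}}(t)) \leq \frac{1}{t} Q_k(t)$.

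To finish, I would pass from the scalar component $Q_k(t)$ to the full vector norm. By part 1 of \cref{lm:virtual-queue} each $Q_k(t)\geq 0$, so $Q_k(t) \leq \sqrt{\sum_{j=1}^m Q_j(t)^2} = \Vert \mathbf{Q}(t)\Vert$. Substituting the bound from \cref{lm:queue-bound}, which holds precisely under the hypothesis $\alpha > \frac{1}{2}\beta^2$, produces
\begin{align*}
g_k(\overline{\mathbf{x}}(t)) \leq \frac{1}{t}\Vert \mathbf{Q}(t)\Vert \leq \frac{1}{t}\Big(2\Vert\boldsymbol{\lambda}^\ast\Vert + \sqrt{2\alpha}\Vert\mathbf{x}^\ast-\mathbf{x}(-1)\Vert + \sqrt{\tfrac{\alpha}{\alpha-\frac{1}{2}\beta^2}}\Vert\mathbf{g}(\mathbf{x}^\ast)\Vert\Big),
\end{align*}
which is the claim.

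There is no genuine obstacle in this particular argument; it is a clean three-line composition of prior results. The substantive work has already been done upstream in establishing \cref{lm:queue-bound}, whose derivation is where the real difficulty lies: it requires sandwiching $\sum_{\tau=0}^{t-1} f(\mathbf{x}(\tau))$ between the upper bound of part 2 of \cref{lm:obj-diff-bound-from-dpp-bound} (featuring the $-\frac{1}{2}\Vert\mathbf{Q}(t)\Vert^2$ term) and the lower bound from strong duality in \cref{lm:obj-diff-bound-from-strong-duality} (featuring $-\Vert\boldsymbol{\lambda}^\ast\Vert\Vert\mathbf{Q}(t)\Vert$), then solving the resulting quadratic inequality in $\Vert\mathbf{Q}(t)\Vert$. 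The only points requiring care here are verifying that $\overline{\mathbf{x}}(t)\in\mathcal{X}$ before applying Jensen, and using $Q_k(t)\geq 0$ to justify the componentwise-to-norm step.
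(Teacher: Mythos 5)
Your proposal is correct and follows exactly the paper's own proof: Jensen's inequality on the convex $g_k$, then \cref{lm:queue-constraint-inequality} to bound the running sum by $Q_k(t)$, then passing to $\Vert\mathbf{Q}(t)\Vert$ and invoking \cref{lm:queue-bound}. The only (harmless) extra flourish is citing $Q_k(t)\geq 0$ for the componentwise-to-norm step, which is not strictly needed since $Q_k(t)\leq\vert Q_k(t)\vert\leq\Vert\mathbf{Q}(t)\Vert$ holds regardless of sign.
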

\begin{proof}
Fix $t\geq 1$ and $k\in\{1,2,\ldots,m\}$. Recall that $\overline{\mathbf{x}}(t) = \frac{1}{t}\sum_{\tau=0}^{t-1} \mathbf{x}(\tau)$. Thus, 
\begin{align*}
g_k (\overline{\mathbf{x}}(t)) &\overset{(a)}{\leq} \frac{1}{t} \sum_{\tau=0}^{t-1} g_k(\mathbf{x}(\tau)) \\
 &\overset{(b)}{\leq} \frac{Q_k(t)}{t} \\
 &\leq \frac{\Vert \mathbf{Q}(t)\Vert}{t}\\
 &\overset{(c)}{\leq} \frac{1}{t}\Big(2 \Vert \boldsymbol{\lambda}^\ast \Vert + \sqrt{ 2\alpha} \Vert \mathbf{x}^{\ast} - \mathbf{x}(-1)\Vert + \sqrt{\frac{\alpha}{\alpha - \frac{1}{2}\beta^{2}}} \Vert \mathbf{g}(\mathbf{x}^\ast)\Vert\Big),
 \end{align*}
where (a) follows from the convexity of $g_k(\mathbf{x}), k\in\{1,2,\ldots,m\}$ and Jensen's inequality; (b) follows from \cref{lm:queue-constraint-inequality}; and (c) follows from \cref{lm:queue-bound}.
\end{proof}

\subsection{Convergence Rate of \cref{alg:dpp}}

The next theorem summarizes the last two subsections.
\begin{Thm}\label{thm:overall-convergence}
Let $\mathbf{x}^{\ast}$ be an optimal solution of the problem \eqref{eq:program-objective}-\eqref{eq:program-set-constraint} and $\boldsymbol{\lambda}^\ast$ be a Lagrange multiplier vector satisfying \cref{as:strong-duality}.  If $\alpha > \frac{\beta^{2}}{2}$ in \cref{alg:dpp}, then for all $t\geq 1$, we have
 \begin{align*}
f(\overline{\mathbf{x}}(t))  \leq & f(\mathbf{x}^{\ast}) +\frac{\alpha}{t}\Vert \mathbf{x}^{\ast} - \mathbf{x}(-1)\Vert^{2}, \\
g_{k}(\overline{\mathbf{x}}(t)) \leq&  \frac{1}{t} \Big(2 \Vert \boldsymbol{\lambda}^\ast \Vert +\sqrt{ 2\alpha} \Vert \mathbf{x}^{\ast} - \mathbf{x}(-1)\Vert + \sqrt{\frac{\alpha}{\alpha - \frac{1}{2}\beta^{2}}} \Vert \mathbf{g}(\mathbf{x}^\ast)\Vert \Big), \forall k\in\{1,2,\ldots, m\},
\end{align*}
where $\beta$ is defined in \cref{as:basic}. In summary, \cref{alg:dpp} ensures error decays like $O(1/t)$ and provides an $\epsilon$-approximate solution with convergence time $O(1/\epsilon)$.
\end{Thm}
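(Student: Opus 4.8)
The plan is to observe that \cref{thm:overall-convergence} is merely the conjunction of the two theorems already established in the previous two subsections (Objective Value Violations and Constraint Violations), so that no new analysis is needed; I would prove it by invoking those two results directly. First I would note a hypothesis-compatibility point: the condition $\alpha > \frac{1}{2}\beta^2$ assumed here is strictly stronger than the condition $\alpha \geq \frac{1}{2}\beta^2$ required by the objective bound, so both component theorems are simultaneously in force under the stated assumption, and \cref{as:strong-duality} guarantees the existence of the multiplier $\boldsymbol{\lambda}^\ast$ appearing in the constraint bound.

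For the objective inequality I would read off $f(\overline{\mathbf{x}}(t)) \leq f(\mathbf{x}^{\ast}) + \frac{\alpha}{t}\Vert \mathbf{x}^{\ast} - \mathbf{x}(-1)\Vert^{2}$ directly from the Objective Value Violations theorem, whose proof combines part 1 of \cref{lm:obj-diff-bound-from-dpp-bound} with Jensen's inequality applied to the convex function $f$. For the constraint inequalities I would read off the bound on each $g_k(\overline{\mathbf{x}}(t))$ directly from the Constraint Violations theorem, which in turn rests on \cref{lm:queue-constraint-inequality} (lower-bounding $Q_k(t)$ by the running sum of constraint values), \cref{lm:queue-bound} (bounding $\Vert \mathbf{Q}(t)\Vert$ through strong duality and part 2 of \cref{lm:obj-diff-bound-from-dpp-bound}), and Jensen's inequality applied to each convex $g_k$. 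Concatenating the two displayed bounds reproduces the statement verbatim.

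It then remains to justify the closing claim about convergence time. Both right-hand sides have the form $C/t$ for constants $C$ depending only on $\alpha$, $\beta$, $\mathbf{x}(-1)$, $\mathbf{x}^{\ast}$, and $\boldsymbol{\lambda}^\ast$ but not on $t$; hence, to force the objective gap and every constraint violation below a target $\epsilon$, it suffices to take $t \geq C_{\max}/\epsilon$, where $C_{\max}$ is the largest of the relevant constants. This yields an $\epsilon$-approximate solution after $O(1/\epsilon)$ iterations, which is the promised $O(1/t)$ error decay restated as an iteration count.

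I do not anticipate any genuine obstacle at this last step: all of the analytic difficulty has already been absorbed into \cref{lm:dpp-bound} (the refined drift-plus-penalty bound that exploits the $\mathbf{g}(\mathbf{x}(t-1))$ cross term together with the proximal quadratic to cancel the residual $\Vert \mathbf{x}(t)-\mathbf{x}(t-1)\Vert^2$ contribution) and into \cref{lm:queue-bound}. The only points that warrant a moment of care are the hypothesis-compatibility check $\alpha > \frac{1}{2}\beta^2 \Rightarrow \alpha \geq \frac{1}{2}\beta^2$, so that the objective theorem is applicable, and the routine translation of the $O(1/t)$ decay into the $O(1/\epsilon)$ complexity bound.
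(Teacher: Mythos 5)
Your proposal is correct and matches the paper's own treatment: the paper presents \cref{thm:overall-convergence} explicitly as a summary of the two preceding theorems (Objective Value Violations and Constraint Violations), with no additional argument beyond combining them under the common hypothesis $\alpha > \frac{1}{2}\beta^{2}$. Your hypothesis-compatibility check and the routine translation of the $O(1/t)$ bounds into $O(1/\epsilon)$ convergence time are exactly what the paper leaves implicit.
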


\section{Application: Decentralized Network Utility Maximization} \label{section:network} 

This section considers the application of \cref{alg:dpp} to decentralized multipath network utility maximization problems.

\subsection{Decentralized Multipath Flow Control}

Network flow control can be formulated as the convex optimization of maximizing network utility subject to link capacity constraints \cite{Kelly98JORS}. In this view, many existing TCP protocols can be interpreted as distributed solutions to network utility maximization (NUM) problems \cite{Low00DualityModelTCP}. 

In single path network flow control problems, if the utility functions are strictly convex, work \cite{Low99TON} shows that the dual subgradient algorithm (with convergence rate $O(1/\sqrt{t})$) can yield a distributed flow control algorithm.  If the utility functions are only convex but not necessarily strictly convex, the DPP algorithm (or dual subgradient algorithm with primal averaging) can yield a distributed flow control algorithm with an $O(1/\sqrt{t})$ convergence rate \cite{Neely05DCDIS,Nedic09,Neely14Arxiv_ConvergenceTime}. If utility functions are strongly convex, a faster network flow control algorithm with an $O(1/t)$ convergence rate is proposed in \cite{Beck14}. A recent work \cite{YuNeely15CDC} shows that the distributed network flow control based on the DPP algorithm also has convergence rate $O(1/t)$ if utility functions are strongly convex. Other Newton method based distributed algorithms for network flow control with strictly convex utility functions are considered in \cite{Wei13TAC-1}.

However, in multipath network flow control problems, even if the utility function is strictly or strongly convex with respect to the source rate, it is no longer strictly or strongly convex with respect to path rates. Thus, many of the above algorithms requiring strict or strong convexity can no longer be applied.  The DPP algorithm can still be applied but the convergence rate is only $O(1/\sqrt{t})$. Distributed algorithms based on the primal-dual subgradient method, also known as the Arrow-Hurwicz-Uzawa subgradient method, have been considered in \cite{Low03PE}. However, the convergence rate\footnote{Similar to the dual subgradient method, the primal-dual subgradient method has an $O(1/\sqrt{t})$ convergence rate for general convex programs in the sense that $O(1/\epsilon^2)$ iterations are required to obtain an $\epsilon$-approximate solution. However, the primal-dual subgradient method does not have vanishing errors.} of the primal-dual subgradient method for general convex programs without strong convexity is known to be $O(1/\sqrt{t})$ \cite{Nedic09_PrimalDualSubgradient}.  

As shown in the previous sections, \cref{alg:dpp} has an $O(1/t)$ convergence rate for general convex programs and yields a distributed algorithm if the objective function and constraint functions are separable.  The next subsection applies \cref{alg:dpp} to the multipath network flow control problem. The resulting algorithm has  structural 
properties and implementation characteristics similar to the subgradient-based algorithm of \cite{Low00DualityModelTCP} and to the DPP algorithm of \cite{YuNeely15CDC}, but has additional decision variables due to the multipath formulation. (The algorithms in \cite{Low00DualityModelTCP, YuNeely15CDC} rely on strict and strong convexity of the objective function, respectively, and 
apply only to single path situations.)
\subsection{Decentralized Multipath Flow Control Based on \cref{alg:dpp}}
Suppose there are $S$ sources enumerated by $\mathcal{S}=\{1,2,\ldots,S\}$  and $L$ links enumerated 
by $\mathcal{L}=\{1,2,\ldots,L\}$.  Each link $l \in \mathcal{L}$ has a link capacity of $c_l$ bits/slot. Each source sends data from a specific origin to a specific destination, and has multiple path options. The paths for each source can use overlapping links, and they can also overlap with paths of other sources.  Further, two distinct sources can have identical paths.  However, it is useful to give distinct path indices to paths associated with each source
(even if their corresponding paths are physically the same). Specifically, for each source $s$, define $\mathcal{P}_s$ as the set of path indices used by source $s$. The index sets $\mathcal{P}_1, \ldots, \mathcal{P}_S$ are disjoint and $\mathcal{P}_1 \cup \mathcal{P}_2 \cup \cdots \cup\mathcal{P}_S = \mathcal{K} = \{1,2,\ldots, K\}$, where $K$ is the number of path indices.

For each source $s$, let $U_s(y_s)$ be a real-valued, concave, continuous and nondecreasing utility function defined for $y_s\geq 0$. This represents the satisfaction source $s$ receives by communicating with a total rate of $y_s$, where the total rate sums over all paths it used.

Note that different paths can share links in common. Define $\mathcal{D}_l\subseteq \mathcal{K}$ as the set of paths that use link $l$ and $\mathcal{E}_{k} \subseteq \mathcal{L}$ as the set of links used by path $k$. Let $\mathbf{x} = [x_1,\ldots, x_K]^T$ be the vector that specifies the flow rate on each path; and  $\mathbf{y} = [y_1, \ldots, y_S]^T$ be the vector that specifies the rate of each source. 

The goal is to allocate flow rates on each path so that no link is overloaded and network utility is maximized.  This multipath network utility maximization problem can be formulated as follows:
\begin{align}
\text{maximize} ~\quad & \sum_{s=1}^{S} U_{s}(y_{s}) \label{eq:dude1}  \\
\text{subject to} \quad  &  \sum_{k\in \mathcal{D}_l} x_k \leq c_l, \forall l\in \mathcal{L}, \label{eq:dude2} \\
			 &  y_{s} = \sum_{k\in \mathcal{P}_s} x_{k}  , \forall s\in \mathcal{S}, \label{eq:dude3} \\
			 &  0\leq x_{k} \leq x_k^{\max}, \forall k\in\mathcal{K}, \label{eq:dude4}  \\
			 & 0 \leq y_{s} \leq y_{s}^{\max}, \forall s\in \mathcal{S}, \label{eq:dude5} 
\end{align}
where $x^{\max}$ and $y^{\max}$ are the allowed maximum path rate and maximum source rate, respectively.  The expression \eqref{eq:dude1} represents the network utility;  inequality \eqref{eq:dude2} specifies the link capacity constraints; and equality 
\eqref{eq:dude3} enforces the definition of $y_s$.   The linear equality constraints \eqref{eq:dude3} can be formally treated by writing each one as two linear inequality constraints.  However, since the utility functions $U_s(\cdot)$ are nondecreasing and seek to maximize the $y_s$ values, it is clear that the above problem is equivalent to the following: 
\begin{align}
\text{minimize}~\quad & -\sum_{s=1}^{S} U_{s}(y_{s}) \label{eq:network-obj}\\
\text{subject to} \quad  &  \sum_{k\in \mathcal{D}_l} x_k \leq c_l, \forall l\in \mathcal{L},\label{eq:network-link-capacity-cons}\\
			 &  y_{s} \leq \sum_{k\in \mathcal{P}_s} x_{k}  , \forall s\in \mathcal{S}, \label{eq:network-source-rate-cons}\\
			 &  0\leq x_{k} \leq x_{k}^{\max}, \forall k\in\mathcal{K}, \label{eq:network-path-box-cons}\\
			 & 0 \leq y_{s} \leq y_{s}^{\max}, \forall s\in \mathcal{S}. \label{eq:network-source-box-cons}
\end{align}

Note that the constraints \eqref{eq:network-link-capacity-cons}-\eqref{eq:network-source-rate-cons} are linear and can be written as $\mathbf{A}\left[\begin{array}{c}\mathbf{x}\\ \mathbf{y}\end{array}\right] \leq \left[\begin{array}{c}\mathbf{c}\\ \mathbf{0}\end{array}\right]$, where $\mathbf{A}$ is an $(L+S)\times (K+S)$ matrix of which each entry is in $\{0,\pm1\}$.  This inequality constraint $\mathbf{A}\left[\begin{array}{c}\mathbf{x}\\ \mathbf{y}\end{array}\right] \leq \left[\begin{array}{c}\mathbf{c}\\ \mathbf{0}\end{array}\right]$ can be treated as the inequality constraint \eqref{eq:program-inequality-constraint} defined as $\mathbf{g}(\mathbf{z}) = \mathbf{A}\mathbf{z} - \mathbf{b} \leq \mathbf{0}$ with $\mathbf{z} = \left[\begin{array}{c}\mathbf{x}\\ \mathbf{y}\end{array}\right]$ and $\mathbf{b}= \left[\begin{array}{c}\mathbf{c}\\ \mathbf{0}\end{array}\right]$ in the general convex program \eqref{eq:program-objective}-\eqref{eq:program-set-constraint}. The box constraints \eqref{eq:network-path-box-cons}-\eqref{eq:network-source-box-cons} can be treated as the set constraint \eqref{eq:program-set-constraint} in the general convex program \eqref{eq:program-objective}-\eqref{eq:program-set-constraint}.  Thus, the multipath network utility maximization problem \eqref{eq:network-obj}-\eqref{eq:network-source-box-cons} is a special case of the general convex program \eqref{eq:program-objective}-\eqref{eq:program-set-constraint}.

The Lipschitz continuity of $\mathbf{g}(\mathbf{z})$ is summarized in the next lemma.
\begin{Lem}\label{lm:network-beta-bound}
If the constraints \eqref{eq:network-link-capacity-cons}-\eqref{eq:network-source-rate-cons} are written as $\mathbf{g}(\mathbf{z}) \leq \mathbf{0}$, then we have 
\begin{enumerate}
\item The function $\mathbf{g}(\mathbf{z})$ is Lipschitz continuous with modulus 
\begin{align*}
\beta \leq  \sqrt{ S+K + \sum_{k\in \mathcal{K}} d_k},
\end{align*}
where $d_k$ is the length, i.e., number of hops, of path $k\in \mathcal{K}$. 
\item The function $\mathbf{g}(\mathbf{z})$ is Lipschitz continuous with modulus  $\beta\leq \sqrt{(L+1)K+S}$.
\end{enumerate}
\end{Lem}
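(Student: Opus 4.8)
The plan is to exploit the fact that $\mathbf{g}(\mathbf{z}) = \mathbf{A}\mathbf{z} - \mathbf{b}$ is an affine map, so that for any $\mathbf{z}_1, \mathbf{z}_2$ we have $\mathbf{g}(\mathbf{z}_1) - \mathbf{g}(\mathbf{z}_2) = \mathbf{A}(\mathbf{z}_1 - \mathbf{z}_2)$. By the definition of the matrix spectral norm $\Vert \mathbf{A}\Vert_2 = \max_{\mathbf{z}\neq \mathbf{0}} \Vert \mathbf{A}\mathbf{z}\Vert / \Vert \mathbf{z}\Vert$, this gives $\Vert \mathbf{g}(\mathbf{z}_1) - \mathbf{g}(\mathbf{z}_2)\Vert \leq \Vert \mathbf{A}\Vert_2 \Vert \mathbf{z}_1 - \mathbf{z}_2\Vert$, so $\mathbf{g}$ is Lipschitz continuous with modulus $\beta = \Vert \mathbf{A}\Vert_2$. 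The remaining task is to bound this spectral norm, and the cleanest way is via the standard inequality $\Vert \mathbf{A}\Vert_2 \leq \Vert \mathbf{A}\Vert_F$, where $\Vert \mathbf{A}\Vert_F = \big(\sum_{i,j} A_{ij}^2\big)^{1/2}$ is the Frobenius norm. Since every entry of $\mathbf{A}$ lies in $\{0, \pm 1\}$, we have $A_{ij}^2 \in \{0,1\}$, and hence $\Vert \mathbf{A}\Vert_F^2$ equals exactly the number of nonzero entries of $\mathbf{A}$. The whole proof thus reduces to counting nonzeros.

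Next I would count the nonzero entries row by row, splitting $\mathbf{A}$ into its two blocks of rows. The first $L$ rows encode the link capacity constraints \eqref{eq:network-link-capacity-cons}: row $l$ has a $+1$ in the column of $x_k$ for each $k \in \mathcal{D}_l$, contributing $\vert \mathcal{D}_l\vert$ nonzeros, so the link rows contribute $\sum_{l\in\mathcal{L}} \vert \mathcal{D}_l\vert$ in total. The next $S$ rows encode the source rate constraints \eqref{eq:network-source-rate-cons}, written as $y_s - \sum_{k\in\mathcal{P}_s} x_k \leq 0$: row $s$ has one $+1$ for $y_s$ and $\vert \mathcal{P}_s\vert$ entries equal to $-1$, contributing $1 + \vert \mathcal{P}_s\vert$ nonzeros each. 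Summing over $s$ and using that $\mathcal{P}_1,\ldots,\mathcal{P}_S$ partition $\mathcal{K}$, so that $\sum_{s\in\mathcal{S}} \vert \mathcal{P}_s\vert = K$, the source rows contribute $S + K$ in total.

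The key step is the double-counting identity that converts the link-side count into a path-side count. Since $k \in \mathcal{D}_l$ if and only if $l \in \mathcal{E}_k$, interchanging the order of summation gives
\begin{align*}
\sum_{l\in\mathcal{L}} \vert \mathcal{D}_l\vert = \sum_{l\in\mathcal{L}}\sum_{k\in\mathcal{K}} \mathbf{1}[k\in\mathcal{D}_l] = \sum_{k\in\mathcal{K}}\sum_{l\in\mathcal{L}} \mathbf{1}[l\in\mathcal{E}_k] = \sum_{k\in\mathcal{K}} \vert \mathcal{E}_k\vert = \sum_{k\in\mathcal{K}} d_k,
\end{align*}
where the last equality uses that $d_k = \vert \mathcal{E}_k\vert$ is the number of hops of path $k$. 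Combining the two blocks yields $\Vert \mathbf{A}\Vert_F^2 = S + K + \sum_{k\in\mathcal{K}} d_k$, which establishes part~1 after taking square roots. For part~2, I would simply observe that each path traverses at most $L$ distinct links, so $d_k \leq L$ for every $k$, whence $\sum_{k\in\mathcal{K}} d_k \leq KL$; substituting this into the part~1 bound gives $\beta \leq \sqrt{S + K + KL} = \sqrt{(L+1)K + S}$.

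I do not anticipate a genuine obstacle here: the argument is elementary once the affine-map-to-spectral-norm reduction and the $\Vert \cdot\Vert_2 \leq \Vert \cdot\Vert_F$ bound are in place. The only point requiring care is the double-counting identity above, which hinges on correctly matching the incidence relation $k\in\mathcal{D}_l \Leftrightarrow l\in\mathcal{E}_k$ and on the disjointness of the index sets $\mathcal{P}_s$; both follow directly from the definitions in this subsection.
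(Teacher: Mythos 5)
Your proposal is correct and follows essentially the same route as the paper: reduce the Lipschitz modulus of the affine map $\mathbf{g}(\mathbf{z})=\mathbf{A}\mathbf{z}-\mathbf{b}$ to the spectral norm of $\mathbf{A}$, bound it by the Frobenius norm, and count the $\{0,\pm 1\}$ nonzero entries, with part~2 following from $d_k \leq L$. The only cosmetic difference is that you count the link-constraint block row-wise and convert to $\sum_{k\in\mathcal{K}} d_k$ via double counting, whereas the paper counts that block column-wise (each path column has exactly $d_k$ ones), which yields the same total directly.
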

\begin{proof}~
\begin{enumerate}
\item Recall that $\mathbf{g}(\mathbf{z}) = \mathbf{A}\mathbf{z} - \mathbf{b}$ is Lipschitz continuous with modulus $\beta = \sigma_{\max}(\mathbf{A})$, where $\sigma_{\max}(\mathbf{A})$ is the maximum singular value of matrix $\mathbf{A}$, and a simple upper bound of $\sigma_{\max}(\mathbf{A})$ is the Frobenius norm given by $\Vert \mathbf{A}\Vert_F = \text{tr}(\mathbf{A}^T\mathbf{A}) = \sqrt{\sum_{i,j} A_{ij}^2}$.  Note that $\mathbf{A}$  can be written as 
\begin{align*}
\mathbf{A} = \left[ \begin{array}{cc} \mathbf{R} & \mathbf{0}_{L\times S} \\ -\mathbf{T} & \mathbf{I}_{S} \end{array}\right],
\end{align*}
where $\mathbf{R}$ is a $\{0,1\}$ matrix of size $L\times K$ , $\mathbf{0}_{L\times S}$ is an $L\times S$ zero matrix, $\mathbf{T}$ is a $\{0,1\}$ matrix of size $S\times K$ and $\mathbf{I}_{S}$ is an $S\times S$ identity matrix.  Note that the $(l,k)$-th entry of $\mathbf{R}$ is $1$ if and only if path $k$ uses link $l$;  the $(s,k)$-th entry of $\mathbf{T}$ is $1$ if and only if path $k$ is a path for source $s$, i.e, $k\in \mathcal{P}_s$.  Matrix $\mathbf{R}$ has  $\sum_{k\in \mathcal{K}} d_k$ non-zero entries since each column has exactly $d_k$ non-zero entries. Matrix $\mathbf{T}$ has $K$ non-zero entries since there are in total $K$ network paths.  Thus, matrix $\mathbf{A}$ in total has $S+K+\sum_{k\in \mathcal{K}} d_k$ non-zero entries whose absolute values are equal to $1$. It follows that $\beta \leq \sqrt{ S+K + \sum_{k\in \mathcal{K}} d_k}$.
\item  This part follows from the fact that the length of each path is at most $L$.
\end{enumerate}
\end{proof}

Note that the second bound in the above lemma is more loose than the first one but holds regardless of the flow path configurations in the network. A straightforward application of \cref{alg:dpp} yields the following decentralized network flow control algorithm described in \cref{alg:network-flow}. Similar to the flow control based on the dual subgradient algorithm,  \cref{alg:network-flow} is decentralized and can be easily implemented within the current TCP protocols  \cite{Low00DualityModelTCP}.

\begin{algorithm} 
\caption{}
\label{alg:network-flow}
\begin{itemize}
\item Initialization: Let $x_{k}(-1)\in [0, x_{k}^{\max}], \forall k$ be arbitrary, $y_{s}(-1)\in [0, y_{s}^{\max}], \forall s$ be arbitrary, $Q_{l}(0) = \max\{0, - \sum_{k\in \mathcal{D}_{l}} x_{k}(-1) + c_{l}\}, \forall l\in \mathcal{L}$, $Y_l(0) = Q_l(0) + \sum_{k\in \mathcal{D}_{l}} x_{k}(-1) - c_{l}, \forall l\in \mathcal{L}$, $R_{s}(0) = \max\{0, \sum_{k\in \mathcal{P}_{s}} x_{k}(-1) - y_{s}(-1)\},\forall s\in \mathcal{S}$ and $Z_{s}(0) = R_s(0) + y_{s}(-1) - \sum_{k\in \mathcal{P}_{s}} x_{k}(-1), \forall s\in \mathcal{S}$. 
\item Each link $l$'s algorithm: At each time $t\in\{0,1,\ldots\}$, link $l$ does the following:
\begin{enumerate}
\item Receive the path rates $x_{k}(t)$ that use link $l$. Update $Q_l(t)$ via: 
\begin{align*}
Q_{l}(t+1) = \max\Big\{- \sum_{k\in \mathcal{D}_{l}} x_{k}(t) + c_{l}, Q_{l}(t) + \sum_{k\in \mathcal{D}_{l}} x_{k}(t) - c_{l}\Big\}. 
\end{align*}
\item The price of this link is given by $Y_{l}(t+1) = Q_{l}(t+1) + \sum_{k\in \mathcal{D}_{l}} x_{k}(t) - c_{l}$.
\item Communicate the link price $Y_{l}(t+1)$ to sources that use link $l$.
\end{enumerate}
\item Each source $s$'s algorithm: At each time $t\in\{0,1,\ldots\}$, source $s$ does the following:
\begin{enumerate}
\item Receive from the network the link prices $Y_{l}(t)$ for all links $l$ that are used by any path of source $s$.
\item Update the path rates $x_{k}, k\in \mathcal{P}_{s}$ by  
\begin{align*}
x_{k}(t) &= \argmin_{0 \leq x_{k}\leq x_{k}^{\max}} \Big\{ \big[\sum_{l\in \mathcal{E}_{k}}Y_{l}(t)  - Z_{s}(t)\big] x_{k} + \alpha (x_{k} -x_{k}(t-1))^{2}\Big\}\\
&= \Big[ x_{k}(t-1) - \frac{1}{2\alpha}\big(\sum_{l\in \mathcal{E}_{k}}Y_{l}(t)  - Z_{s}(t)\big)  \Big]_{0}^{x_{k}^{\max}},
\end{align*}
where $[z]^{b}_{a} = \min\{\max\{z, a\},b\}$.
\item Communicate the path rates $x_{k}(t), k\in \mathcal{P}_{s}$ to all links that are used by path $k$.
\item Update the source rate $y_{s}(t)$ by 
\begin{align*}
y_{s} (t)= \argmin_{0\leq y_{s} \leq y_{s}^{\max}} \Big\{-U_{s}(y_{s}) + Z_{s}(t) y_{s} + \alpha (y_{s} - y_{s}(t-1))^{2}\Big\},
\end{align*}
which usually has a closed form solution for differentiable utilities by taking derivatives.
\item Update virtual queue $R_{s}(t)$ and source price $Z_{s}(t)$ locally by  
\begin{align*}
R_{s}(t+1) &= \max\Big\{ -y_{s}(t) + \sum_{k\in \mathcal{P}_s} x_{k}(t), R_{s}(t) + y_{s}(t) -  \sum_{k\in \mathcal{P}_s} x_{k}(t)\Big\},\\
Z_{s}(t+1) &= R_{s}(t+1) + y_{s}(t) -  \sum_{k\in \mathcal{P}_s} x_{k}(t).
\end{align*}
\end{enumerate}
\end{itemize}
\end{algorithm}
By \cref{thm:overall-convergence}, if we choose $\alpha \geq \frac{1}{2} \big(S+K + \sum_{k\in \mathcal{K}} d_k\big)$, then for any $t\geq 1$, 
\begin{align}
\sum_{s=1}^{S} U_{s}(\overline{y}_{s}(t)) &\geq  \sum_{s=1}^{S} U_{s}(y_{s}^{\ast}) - O(1/t), \label{eq:net-utility} \\
\sum_{k\in \mathcal{D}_{l}} \overline{x}_{k}(t)  &\leq c_{l}  + O(1/t), \qquad \qquad\quad~~\forall l\in \mathcal{L},\\
\overline{y}_{s}(t) &\leq  \sum_{k\in \mathcal{P}_s} \overline{x}_{k}(t) + O(1/t),  \qquad \forall s\in \mathcal{S},  
\end{align} 
where $(\mathbf{x}^{\ast}, \mathbf{y}^{\ast})$ is an optimal solution of the problem \eqref{eq:network-obj}-\eqref{eq:network-source-box-cons}.  
Note that if \cref{alg:network-flow} has been run for a sufficiently long time and we are satisfied with the current performance, then we can fix $\mathbf{x} = \frac{1}{t}\sum_{\tau=0}^{t-1} \mathbf{x}(\tau)$ and $\mathbf{y}= \frac{1}{t}\sum_{\tau=0}^{t-1} \mathbf{y}(\tau)$ such that  the performance  is still within $O(1/t)$ sub-optimality for all future time.

\subsection{Decentralized Joint Flow and Power Control}

Since \cref{alg:dpp} allows for general nonlinear convex constraint functions, it can also be applied to solve the joint flow and power control problem.  In this case, the capacity of each link $l$ is not fixed but depends concavely on 
a power allocation variable $p_l$.  Assuming each link capacity is logarithmic in $p_l$ results in the following problem: 
\begin{align}
\text{maximize}~\quad & \sum_{s=1}^{S} U_{s}(y_{s}) - \sum_{l=1}^L V_l(p_l)\\
\text{subject to} \quad  &  \sum_{k\in \mathcal{D}_l} x_k \leq \log(1+p_l), \forall l\in \mathcal{L},\\
			 &  y_{s} \leq \sum_{k\in \mathcal{P}_s} x_{k}  , \forall s\in \mathcal{S}, \\
			 &  0\leq x_{k} \leq x_{k}^{\max}, \forall k\in\mathcal{K}, \\
			 & 0 \leq y_{s} \leq y_{s}^{\max}, \forall s\in \mathcal{S}, \\
			 & 0 \leq p_l \leq p_{l}^{\max}, \forall l\in \mathcal{L},
\end{align}
where $p_l$ is the power allocated at link $l$, $\log(1+p_l)$ is the corresponding link capacity as a function of $p_l$, and $V_l(p_l)$ is the associated power cost  (assumed to be a convex function of $p_l$).  A decentralized joint flow and power control algorithm for this problem can be similarly developed by applying \cref{alg:dpp}. 

\section{Numerical Results}
This section considers numerical experiments to verify the convergence rate results shown in this paper. 

\subsection{Decentralized Multiplath Flow Control}

Consider the simple multipath network flow problem described in \cref{fig:network-flow}. Assume each link has capacity $1$. Let $y_{1}, y_{2}$ and $y_{3}$ be the data rates of source $1, 2$ and $3$; $x_1, x_2, x_3, x_4, x_5, x_{6}$ and $x_7$ be the data rates of the paths indicated in the figure; and the network utility be maximizing $\log (y_{1}) + 2\log(y_{2}) + 2\log(y_{3})$.  The NUM problem can be formulated as follows:
\begin{align*}
\text{maximize}~\quad & \log(y_1)+ 2\log(y_2) + 2\log(y_3) \\
\text{subject to} \quad  &  \mathbf{R}\mathbf{x} \leq \mathbf{c},\\
& \mathbf{y} \leq \mathbf{T}\mathbf{x},\\
& 0\leq x_i \leq x_{i}^{\max}, i\in\{1,2,\ldots,4\}, \\
& 0 \leq y_i \leq y_{i}^{\max}, i\in\{1,2,3\}, 
\end{align*} 
where $\mathbf{R} = \left[\begin{array}{ccccccc} 1 & 0 &0 & 0 &0&0&0 \\ 0 & 1&0&0&0&0&0\\ 0&0&1&0&0&0&0 \\ 1 & 0 &1&0 & 0 &0&0 \\0&1&0&1&0&0&0\\ 0&0&0&0&1&0&0\\0 & 0& 0 &0&1&1&0 \\ 0&0& 0&0 &0&1 &0\\ 0&0&0&0&0&0&1\end{array}\right],\mathbf{c} = \left[\begin{array}{c}1\\1\\1\\1\\1\\1\\1\\1\\1\end{array}\right]$, $\mathbf{T} = \left[\begin{array}{ccccccc} 1 &1 & 0 &0 & 0 &0&0\\ 0&0 & 1&1&1&0&0 \\ 0&0&0&0&0&1&1\end{array}\right]$. The optimal value to this NUM problem is $f^\ast = 1.65687$.

To verify the convergence of \cref{alg:network-flow}, \cref{fig:num_convergence} shows the values of objective and constraint functions yielded by \cref{alg:network-flow} with $\alpha = \frac{1}{2}\big(K+S+\sum_{k\in \mathcal{K}}d_k\big)+1 = 10$  and $\mathbf{x}(-1)=\mathbf{0}$.  (By writing constraints $\mathbf{R}\mathbf{x}\leq \mathbf{c}$ and $\mathbf{y} \leq \mathbf{T}\mathbf{x}$ in the compact form $\mathbf{A}\mathbf{z} \leq \mathbf{b}$, it can be checked that $\beta =\sigma_{\max}(\mathbf{A}) = 2.4307$. If we choose a smaller $\alpha$, e.g., $\alpha = \frac{1}{2}\beta^2+1 = 3.9543$, then \cref{alg:network-flow} converges even faster. In this simulation, we choose a loose $\alpha$ whose value can be easily estimated from \cref{lm:network-beta-bound} without knowing the detailed network topology.) We also compare our algorithm with the dual subgradient algorithm (with primal averaging) with step size $0.01$ in \cite{Nedic09}. (Or equivalently, the DPP algorithm with $V=100$ in \cite{Neely05DCDIS,Neely14Arxiv_ConvergenceTime,YuNeely15CDC}.) Recall that the dual subgradient algorithm in \cite{Neely05DCDIS,Nedic09, Neely14Arxiv_ConvergenceTime,YuNeely15CDC} does not converge to an exact optimal solution but only converges to an approximate solution with an error level determined by the step size. In contrast, our algorithm can eventually converge to the exact optimality.  \cref{fig:num_convergence} shows that \cref{alg:network-flow} converges faster than the dual subgradient algorithm with primal averaging.
  
To verify the convergence rate of \cref{alg:network-flow},  \cref{fig:num_convergence_rate} plots $f(\overline{\mathbf{x}}(t)) - f^\ast$, all constraint values,  function $1/t$, and bounds from \cref{thm:overall-convergence} with both x-axis and y-axis in $\log_{10}$ scales. It can be observed that the curves of $ f(\overline{\mathbf{x}}(t)) - f^\ast$ and all the source rate constraint values are parallel to the curve of $1/t$ for large $t$. Note that all the link capacity constraints are satisfied early (i.e., negative), and hence are not drawn in $\log_{10}$ scales. \cref{fig:num_convergence_rate} verifies that the error of \cref{alg:network-flow} decays like $O(1/t)$ and suggests that it is actually $\Theta(1/t)$ for this multipath NUM problem.

\begin{figure}[htbp]
\centering
   \includegraphics[width=0.9\textwidth,height=0.6\textheight,keepaspectratio=true]{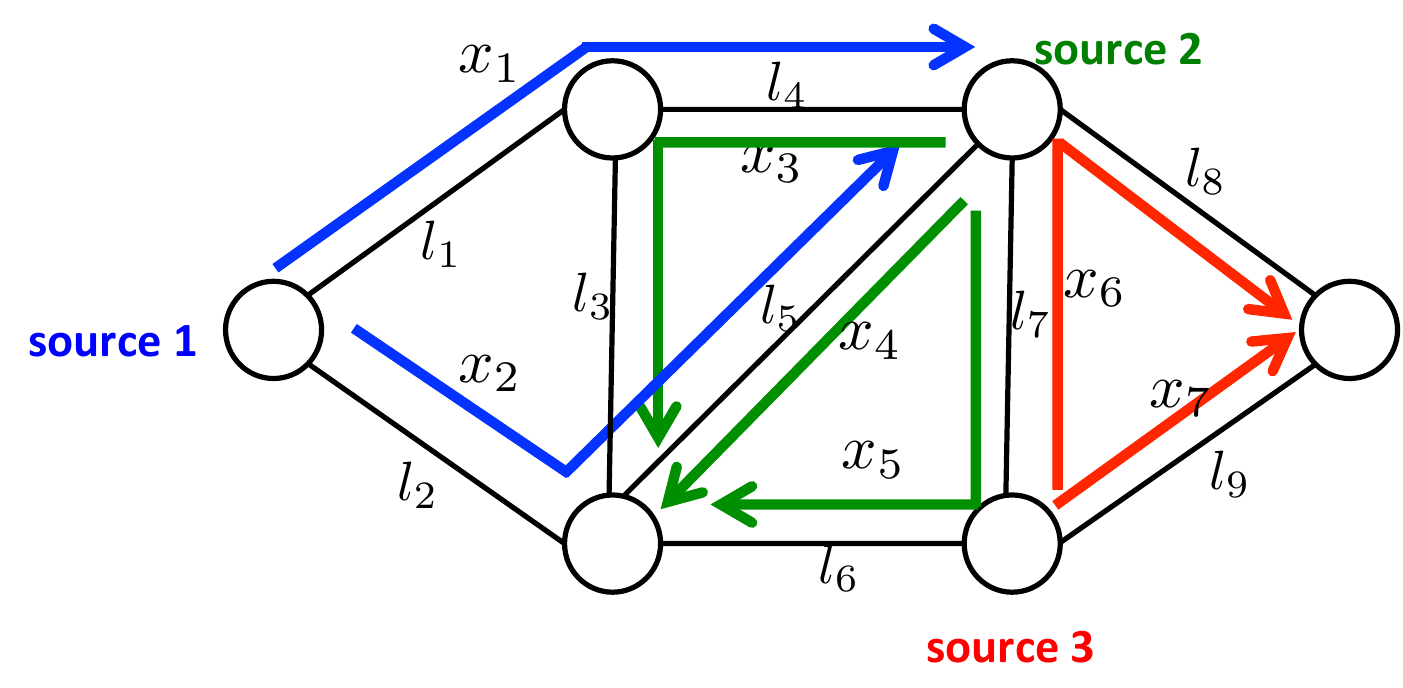} 
   \caption{A simple multipath NUM problem with $3$ sources and $7$ paths.}
   \label{fig:network-flow}
\end{figure}

\begin{figure}[htbp]
\centering
   \includegraphics[width=1\textwidth,height=0.9\textheight,keepaspectratio=true]{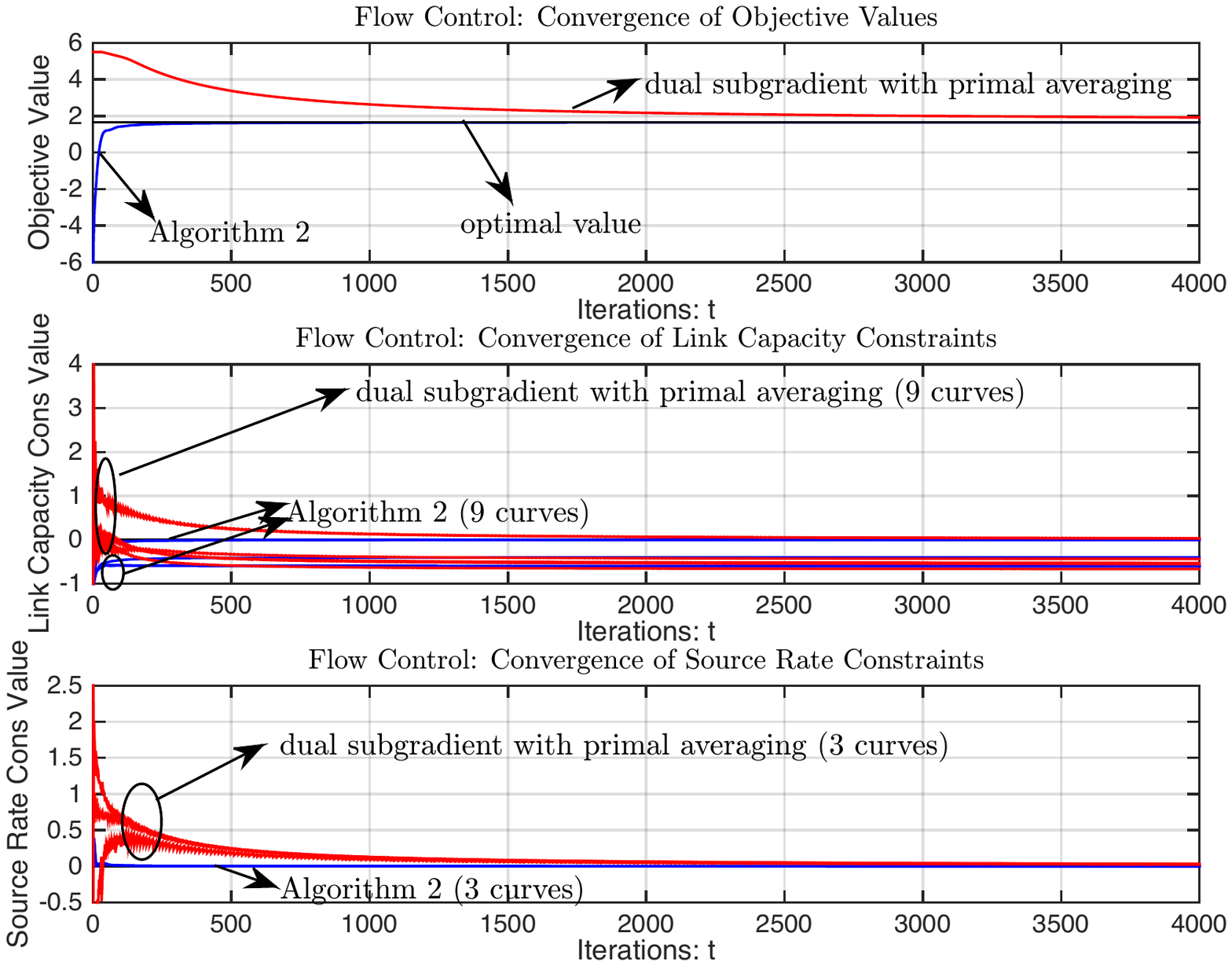} 
   \caption{The convergence of \cref{alg:network-flow} and the dual subgradient algorithm with primal averaging for a multipath flow control problem.}
   \label{fig:num_convergence}
\end{figure}

\begin{figure}[htbp]
\centering
   \includegraphics[width=1\textwidth,height=0.9\textheight,keepaspectratio=true]{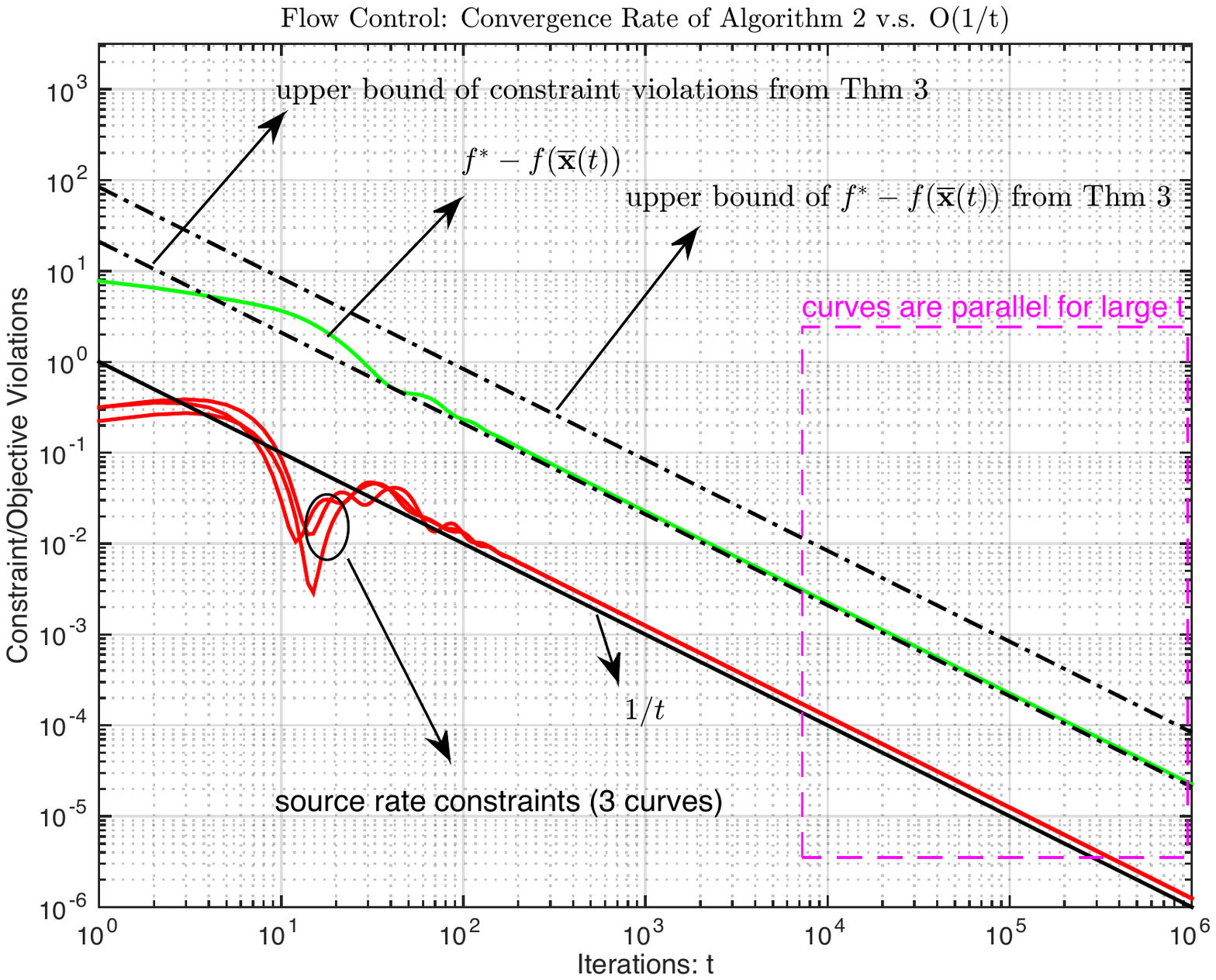} 
   \caption{The convergence rate of \cref{alg:network-flow} for a multipath flow control problem.}
   \label{fig:num_convergence_rate}
\end{figure}

\subsection{Decentralized Joint Flow and Power Control}

Consider the joint flow and power control over the same network described in \cref{fig:network-flow}. We assume the power cost of each link is given by $V_l(p_l) = 0.25 p_l$. The optimal value of this joint flow and power control problem is $f^\ast = -0.521318$.  

To verify the convergence of \cref{alg:dpp}, \cref{fig:power_convergence} shows the values of objective and constraint functions yielded by \cref{alg:dpp} with $\alpha = 10$, $\mathbf{x}(-1) =\mathbf{0}$, $\mathbf{y}(-1) = \mathbf{0}$ and $\mathbf{p}(-1) = \mathbf{0}$.  (In fact, by writing constraints $\mathbf{R}\mathbf{x}\leq \log(1+\mathbf{p})$ and $\mathbf{y} \leq \mathbf{T}\mathbf{x}$ in the compact form $\mathbf{g}(\mathbf{z})\leq \mathbf{0}$, it can be checked that $\beta = 2.5229$. If we choose a smaller $\alpha$, e.g., $\alpha = \frac{1}{2}\beta^2+1 = 4.1826$, then \cref{alg:dpp} converges even faster.) We also compare our algorithm with the dual subgradient algorithm (with primal averaging) with step size $0.01$ in \cite{Neely05DCDIS, Nedic09,Neely14Arxiv_ConvergenceTime, YuNeely15CDC}.  \cref{fig:power_convergence} shows that \cref{alg:dpp} converges faster than the dual subgradient algorithm with primal averaging.
  
To verify the convergence rate of \cref{alg:dpp},  \cref{fig:power_convergence_rate} plots $f(\overline{\mathbf{x}}(t)) - f^\ast$, all constraint values,  function $1/t$, and bounds from \cref{thm:overall-convergence} with both x-axis and y-axis in $\log_{10}$ scales. It can be observed that the curves of $f(\overline{\mathbf{x}}(t)) - f^\ast$ and all the source rate constraint values are parallel to the curve of $1/t$ for large $t$.  

\begin{figure}[htbp]
\centering
   \includegraphics[width=1\textwidth,height=0.9\textheight,keepaspectratio=true]{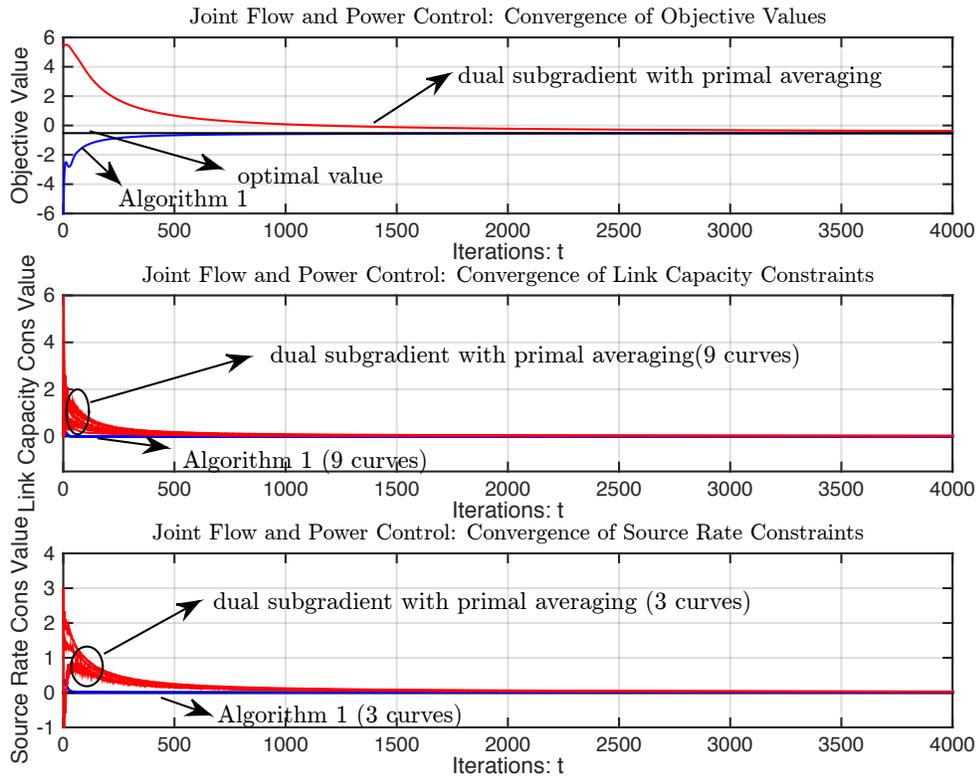} 
   \caption{The convergence of \cref{alg:dpp} and the dual subgradient algorithm with primal averaging for a multipath joint flow and power control problem.}
   \label{fig:power_convergence}
\end{figure}

\begin{figure}[htbp]
\centering
   \includegraphics[width=1\textwidth,height=0.9\textheight,keepaspectratio=true]{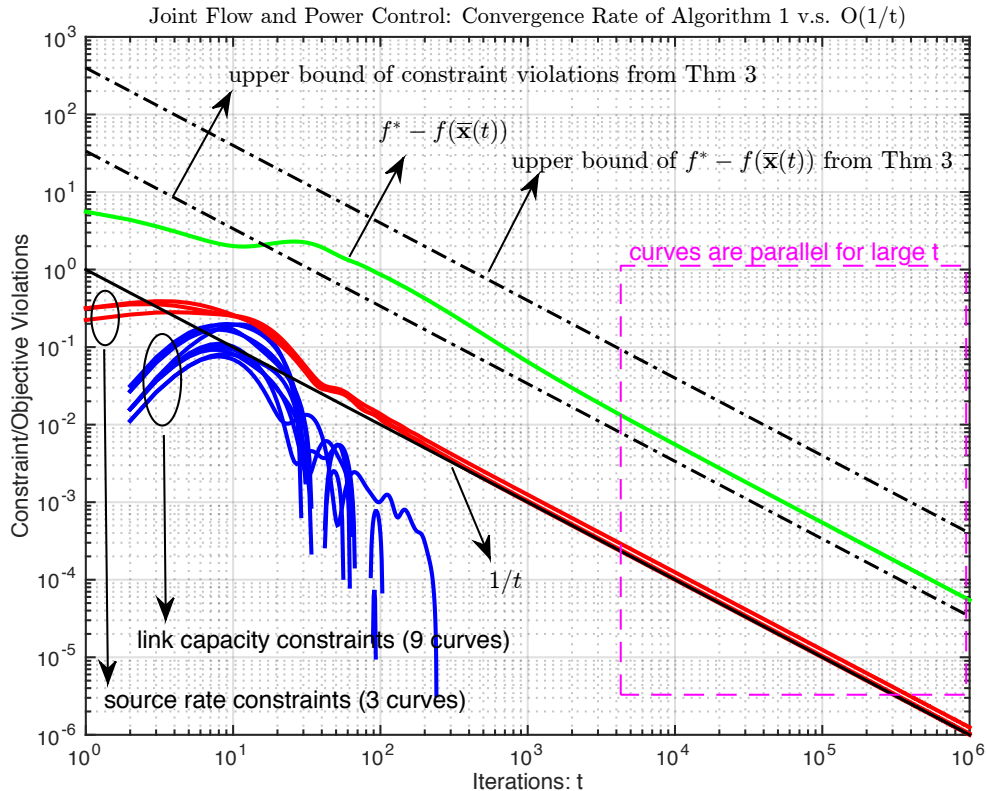} 
   \caption{The convergence rate of \cref{alg:dpp} for a multipath joint flow and power control problem.}
   \label{fig:power_convergence_rate}
\end{figure}

\subsection{Quadratic Programs}
Consider the following  quadratic program
\begin{align*}
\min~~ &  \mathbf{x}^T\mathbf{P}\mathbf{x} + \mathbf{c}^T \mathbf{x}\\
\text{s.t.} \quad  & \mathbf{x}^T \mathbf{Q} \mathbf{x} + \mathbf{d}^T \mathbf{x}\leq e\\
			 & \mathbf{x}^{\min} \leq \mathbf{x} \leq \mathbf{x}^{\max}
\end{align*}
where $\mathbf{P}$ and $\mathbf{Q}$ are positive semidefinite to ensure the convexity of the quadratic program.

We randomly generate a large scale example where $\mathbf{x}\in \mathbb{R}^{100}$, $\mathbf{P}\in \mathbb{R}^{100\times 100}$ is diagonal with entries from uniform $[0,4]$, $\mathbf{c}\in \mathbb{R}^{100}$ with entries from uniform $[-15,20]$, $\mathbf{Q}\in \mathbb{R}^{100\times 100}$ is diagonal with entries from uniform $[0,1]$, $\mathbf{d}\in \mathbb{R}^{100}$ with entries from uniform $[-1,1]$, $e$ is a scalar from uniform $[4,5]$, $\mathbf{x}^{\min} = \mathbf{0}$ and $\mathbf{x}^{\max} = \mathbf{1}$. Note that \cref{alg:dpp} and the dual subgradient algorithm (with primal averaging) can deal with general semidefinite positive matrices $\mathbf{P}$ and $\mathbf{Q}$. However, if $\mathbf{P}$ and $\mathbf{Q}$ are diagonal or block diagonal, then the primal update in both algorithms can be  decomposed into independent smaller problems and hence has extremely low complexity.

To verify the convergence of \cref{alg:dpp}, \cref{fig:qp_convergence} shows the values of objective and constraint functions yielded by \cref{alg:dpp} with $\alpha = \frac{1}{2}\beta^2+1$, where $\beta$ the is Lipschitz modulus of the constraint function, and $\mathbf{x}(-1) =\mathbf{x}^{\min}$.  We also compare our algorithm with the dual subgradient algorithm (with primal averaging) with step size $0.01$ in \cite{Neely05DCDIS, Nedic09,Neely14Arxiv_ConvergenceTime, YuNeely15CDC}.  \cref{fig:qp_convergence} shows that \cref{alg:dpp} converges faster than the dual subgradient algorithm with primal averaging.
  
To verify the convergence rate of \cref{alg:dpp},  \cref{fig:qp_convergence_rate} plots $f(\overline{\mathbf{x}}(t)) - f^\ast$,  function $1/t$, and the bound from \cref{thm:overall-convergence} with both x-axis and y-axis in $\log_{10}$ scales. It can be observed that the curves of $f(\overline{\mathbf{x}}(t)) - f^\ast$ and all the source rate constraint values are parallel to the curve of $1/t$ for large $t$.   Note that the constraint violation is not plotted since the constraint function is satisfied for all iterations as observed in \cref{fig:qp_convergence}.

\begin{figure}[htbp]
\centering
   \includegraphics[width=1\textwidth,height=0.9\textheight,keepaspectratio=true]{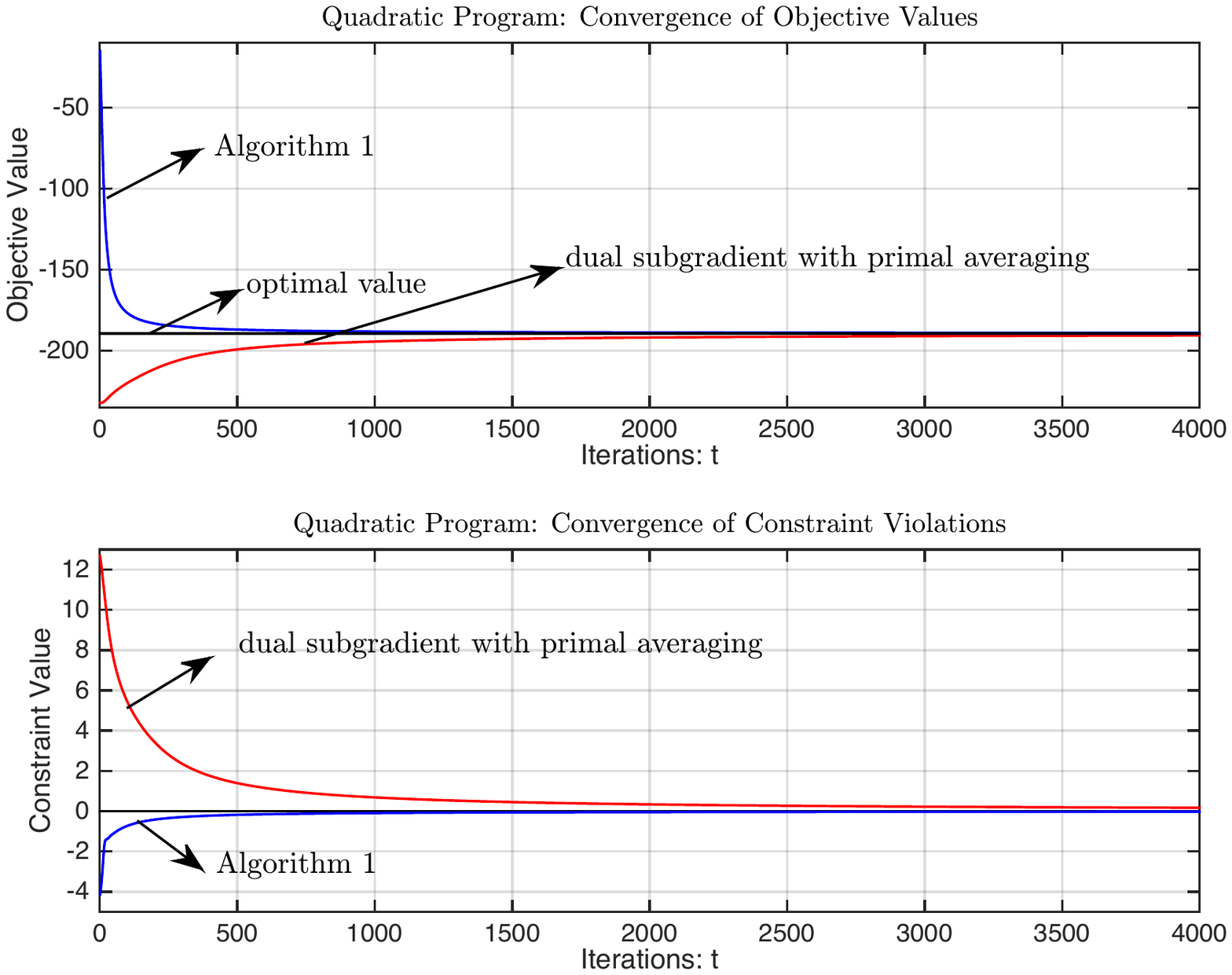} 
   \caption{The convergence of \cref{alg:dpp} and the dual subgradient algorithm with primal averaging for a quadratic program.}
   \label{fig:qp_convergence}
\end{figure}

\begin{figure}[htbp]
\centering
   \includegraphics[width=1\textwidth,height=0.9\textheight,keepaspectratio=true]{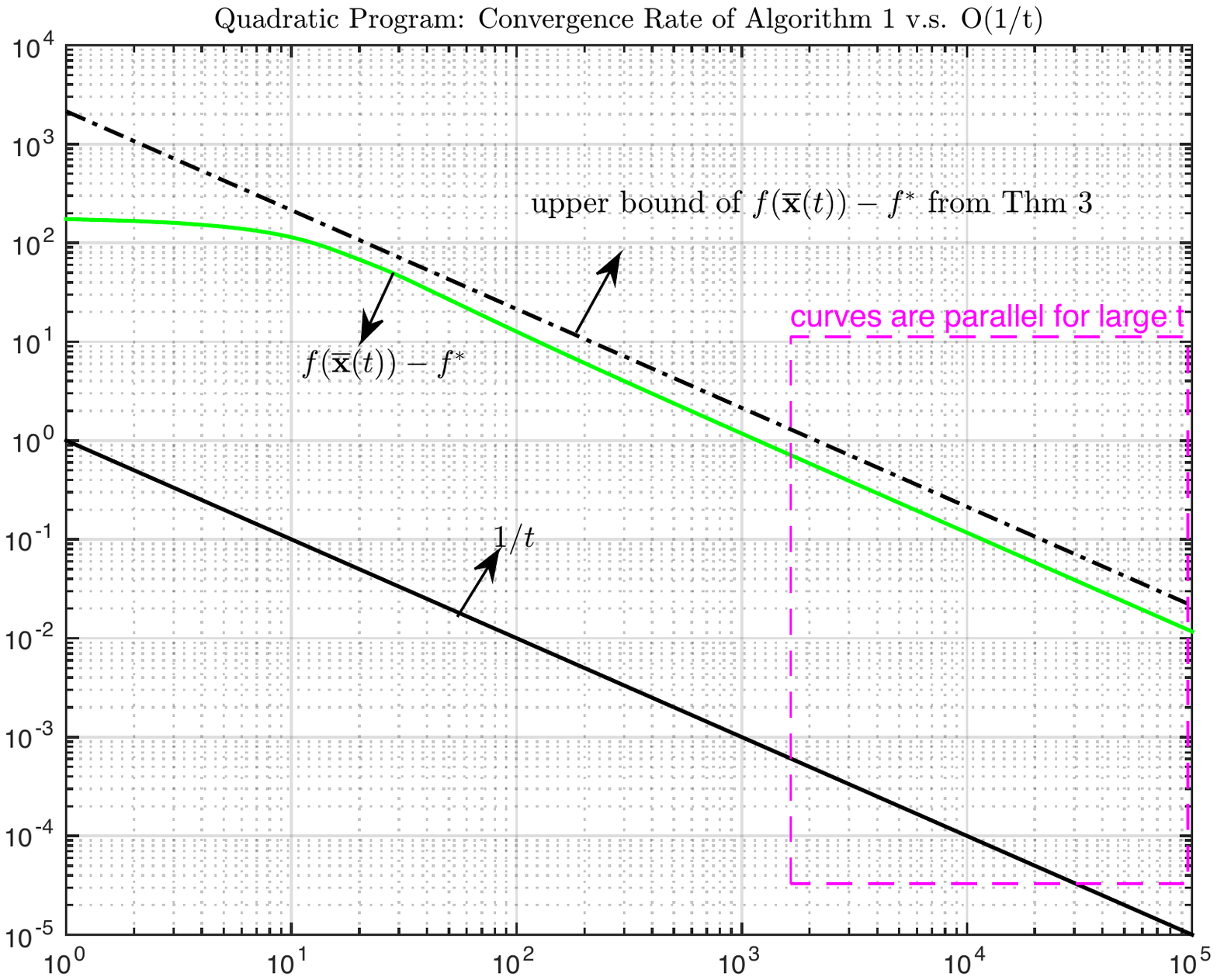} 
   \caption{The convergence rate of \cref{alg:dpp} for a quadratic program.}
   \label{fig:qp_convergence_rate}
\end{figure}

\section{Conclusions}
This paper proposes a novel but simple algorithm to solve convex programs with a possibly non-differentiable objective function and Lipschitz continuous constraint functions.  The new algorithm has a parallel implementation when the objective function and constraint functions are separable. The convergence rate of the proposed algorithm is shown to be $O(1/t)$. This is faster than the $O(1/\sqrt{t})$ convergence rate of the dual subgradient algorithm with primal averaging. The ADMM algorithm has the same $O(1/t)$ convergence rate but can only deal with linear equality constraint functions.  The new algorithm is further applied to solve multipath network flow control problems and yields a decentralized flow control algorithm which converges faster than existing dual subgradient or primal-dual subgradient based flow control algorithms.  The $O(1/t)$ convergence rate of the proposed algorithm is also verified by numerical experiments.

\bibliographystyle{siamplain}  
\bibliography{mybibfile}

\end{document}